\def \zmontar{\buildrel}
\def \za{\alpha}
\def \zb{\beta}
\def \zg{\gamma}
\def \ze{\varepsilon}
\def \zh{\theta}
\def \zl{\lambda}
\def \zm{\mu}
\def \zp{\pi}
\def \zt{\tau}
\def \zf{\varphi}
\def \zS{\Sigma}
\def \zF{\Phi}
\def \zlma{\ell}
\def \zsu{\sum}
\def \zin{\cap}
\def \zun{\cup}
\def \zung{\bigcup}
\def \zpor{\times}
\def \zmei{\leq}
\def \zmai{\geq}
\def \zco{\subset}
\def \zpe{\in}
\def \zeq{\equiv}
\def \znoi{\neq}
\def \znope{\not\in}
\def \zpar{\partial}
\def \zinf{\infty}
\def \zva{\emptyset}
\def \zfl{\rightarrow}
\def \z/{\over}
\newcommand {\CC}{\mathbb C}
\newcommand {\RR}{\mathbb R}
\newcommand {\TT}{\mathbb T}
\newcommand {\ZZ}{\mathbb Z}
\newcommand {\NN}{\mathbb N}
\newcommand {\F}{\mathcal F}
\newcommand {\W}{\mathcal W}
\newcommand {\U}{\mathcal U}
\newcommand {\GG}{\mathbf G}
\newcommand {\co}{\colon}
\newtheorem{theorem}{Theorem}[section]
\newtheorem*{theorem*}{Theorem}
\newtheorem{lemma}[theorem]{Lemma}
\newtheorem{corollary}[theorem]{Corollary}
\newtheorem*{corollary*}{Corollary}
\newtheorem{proposition}[theorem]{Proposition}
\newtheorem*{example*}{Example}
\theoremstyle{definition}
\newtheorem{remark}[theorem]{Remark}
\newtheorem{example}[theorem]{Example}
\title[Minimal dimension of the orbits of a $\mathbb R^n$-action]
{On the minimal dimension of the orbits of a $\mathbb R^n$-action}
\author{Francisco-Javier~Turiel}
\address[F.J.~Turiel]
{Geometr{\'\i}a y Topolog{\'\i}a,
Facultad de Ciencias,
Campus de Teatinos, s/n,
29071-M{\'a}laga, Spain}
\email[F.J.~Turiel]{turiel@uma.es}
\begin{document}

\begin{abstract}
Consider a smooth action of $\mathbb R^n$ on a connected manifold $M$, not necessarily
compact, of dimension $m$ and rank $k$. Assume that $M$ is not a cylinder. Then there 
exists an orbit of the action of dimension $<(m+k)/2$.
As a consequence, one shows that if there is a non-zero element of the ring of
Pontrjagin classes of $M$ of degree $4\ell\geq 4$, then there exists an orbit
of the action of dimension $\leq m-\ell-1$.
\end{abstract} 
 \maketitle

{\em Key words:} Action of $\mathbb R^n$, minimal dimension of the orbits, open manifold.

{\em 2020 Mathematics Subjet Classification:} Primary 37C85; Secondary 57R25.

\section{Introduction} \label{sec-1}

In this work manifolds (always without boundary) and their associated objects are real 
$C^\zinf$ unless another thing is said.

The degeneracy of the orbits of an action of a Lie group on a manifold is nowadays a classical
question. Here we will consider the case of $\RR^n$ acting on open manifolds.
A result by E. Lima \cite{LI}, of 1964, states that any action of $\RR^n$ on a compact 
surface of non-vanishing Euler characteristic has a fixed point. Later on P. Molino and
the author \cite{MT8} have given, for compact manifolds, an upper bound for the minimal
dimension of the orbits of a $\RR^n$-action, which includes the Lima's theorem (in
$C^\zinf$ class) as a particular case. The goal of this paper is to extend this result to
open manifolds.

Other results of this kind for symplectic actions can be seen in \cite{TU}. For some quite 
interesting  results in dimension three see \cite{BS}.

Recall that the {\em span} of a manifold $P$ is the maximal number of vector fields on $P$
that are linearly independent at each point. The same notion but for commuting vector
fields is named the {\em rank} of $P$ (defined by J.W. Milnor at the Seattle Topology
Conference of 1963 and echoed by S.P. Novikov \cite{NO}). Finally, if the vector fields 
commute and are complete this number is called the {\em file} of $P$ (introduced by
H. Rosenberg \cite{RO}). Obviously for compact manifolds file equals rank.

A manifold diffeomorphic to $\TT^r \zpor\RR^{m-r}$ will be called a {\em cylinder of type
$r$ and dimension $m$}. This is our  main result. 

\begin{theorem}\label{T11}
Consider an action of $\RR^n$ on a connected manifold $M$ of dimension $m$ and
rank $k$. Assume that $M$ is not a cylinder. Then there is an orbit of the action 
of dimension $<(m+k)/2$.
\end{theorem}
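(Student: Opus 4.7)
The plan is to argue the contrapositive: assuming the minimum orbit dimension $d$ satisfies $d\geq (m+k)/2$, show that $M$ is a cylinder.

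Setup. Write $X_1,\ldots,X_n$ for the complete commuting vector fields that generate the $\RR^n$-action, and let $Y_1,\ldots,Y_k$ be commuting vector fields linearly independent at every point, realizing the rank of $M$. Denote the orbit through $p$ by $O_p$ and the regular $k$-dimensional foliation integrating the span of the $Y_j$'s by $\F_Y$.

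Step 1 (trivial case). If $d=m$, connectedness forces the action to be transitive, so $M=\RR^n/\Gamma$ for a closed subgroup $\Gamma\zcoi\RR^n$. Every such $\Gamma$ is isomorphic to $\RR^a\zpor\ZZ^b$, and hence $M$ is a cylinder. Thus we may restrict to $(m+k)/2\zmei d\zmei m-1$.

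Step 2 (dimensional overlap). For any $p$, $\dim(T_pO_p\zin T_p\F_Y)\zmai d_p+k-m\zmai(3k-m)/2$, which is strictly positive whenever $3k>m$. Inside each minimum-dimensional orbit $O_p$---itself a cylinder $\RR^n/\Gamma_p$---we therefore have canonical nonzero directions simultaneously tangent to $O_p$ and to $\F_Y$. This provides the structural bridge between the two commuting families.

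Step 3 (main technical step). The core of the proof will be to enlarge $\{Y_1,\ldots,Y_k\}$ to a collection of $k+1$ commuting, globally linearly independent vector fields, contradicting the hypothesis that the rank of $M$ equals $k$. The natural ansatz is $Z=\zsu_i f_i X_i$, with $Z$ nowhere in the span of the $Y_j$'s, and with smooth coefficients $f_i$ satisfying the system of first-order linear PDEs
\begin{equation*}
\mathcal L_{Y_j}\zpizq\zsu_{i=1}^{n}f_i X_i\zpder=0,\quad j=1,\ldots,k,
\end{equation*}
which encodes the commutation relations. The dimensional inequality from Step 2 is what supplies enough freedom in the choice of $(f_1,\ldots,f_n)$ at each point for this system to be locally solvable with $Z\znoi 0$; the point is to globalize the local solutions.

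Step 4 (cylinder conclusion). If the construction of $Z$ is successful, we reach a contradiction and are done. If it is obstructed, the obstructions themselves must hold identically on $M$ under the hypothesis $d\zmai(m+k)/2$; tracing this out forces the combined family $\{X_1,\ldots,X_n,Y_1,\ldots,Y_k\}$ to generate a complete abelian frame of maximal rank on $M$, which via a Rosenberg-type theorem identifies $M$ with $\TT^r\zpor\RR^{m-r}$.

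The principal obstacle is Step 3: solving the PDE system globally on an open manifold. In the compact Molino--Turiel setting of \cite{MT8}, averaging along the action and cohomological finiteness produce the needed enlargement; here neither is available. One must instead exploit completeness of the $X_i$-flows and the rigidity of $\F_Y$ (whose leaves are again cylinders) to construct and control $Z$ at infinity. Handling the behavior of the $f_i$'s on non-relatively-compact leaves, and ensuring that $Z$ remains independent of the $Y_j$'s on the whole of $M$, is precisely the delicate point that fails only when $M$ already carries the cylindrical structure that the conclusion asserts.
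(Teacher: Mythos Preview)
Your proposal is an outline, not a proof, and the crucial Steps 3 and 4 are not actually carried out. In Step 3 you write down a PDE system for $Z=\sum_i f_i X_i$ to commute with the $Y_j$ and remain independent of them, but you give no argument that it has a global solution; you yourself call this ``the principal obstacle'' and leave it unresolved. Step 4 is a dichotomy (``either $Z$ exists or the obstruction forces $M$ to be a cylinder'') for which no justification is offered. As written, the argument stops exactly where the difficulty begins.

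More importantly, the whole strategy of mixing the rank witnesses $Y_1,\ldots,Y_k$ with the action is misguided. There is no a priori relation between the foliation $\F_Y$ and the orbits of the $\RR^n$-action, so the ``structural bridge'' of Step 2 is just the linear-algebra inequality $\dim(A\cap B)\geq\dim A+\dim B-m$ and carries no dynamical content; it does not help you solve $[Y_j,Z]=0$ globally. The paper never touches the $Y_j$'s. Instead it works entirely inside the Lie algebra $V$ of $\RR^n$: one first shows by a transversality argument in Grassmannians that some orbit has dimension $\leq (m+k)/2$, and then, assuming equality is attained, chooses a generic $(k+1)$-plane $F\subset V$ so that the singular set $\{p:\dim F(p)\leq k\}$ is a closed $s$-dimensional submanifold consisting of wandering cylinder leaves. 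The contradiction is obtained not by solving a PDE but by geometric surgery: one builds an open set $M_2\subset M$ diffeomorphic to $M$ in which the singular set sits in standard product neighborhoods, and then locally modifies the infinitesimal action of $F$ (adding a transverse term supported near the singular set) so that the new $k+1$ vector fields are everywhere independent on $M_2$, giving $\mathrm{rank}\,M\geq k+1$. The ingredients you are missing are the Grassmannian transversality lemmas that control where $F$ can fail to have rank $k+1$, and the isotopy/extension lemmas that let you excise and modify neighborhoods of the singular cylinders.
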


(The compact case of the foregoing theorem is just the main result
of \cite{MT8}.)

As a consequence, we show that if there is a non-zero element of the ring of
Pontrjagin classes of $M$ of degree $4\ell\geq 4$, then there exists an orbit
of the action of dimension $\leq m-\ell-1$ (Proposition \ref{P57}).

This work consists of six sections, the first one being the introduction. Sections
\ref{sec-2} and \ref{sec-3} are devoted to state some results needed later on. 
The proof of Theorem \ref{T11}, the main result, is given in Section 4.
In Section 5 we illustrate Theorem \ref{T11} with several examples and
one proves Proposition \ref{P57}. Finally in Section 6 one studies the $\RR^n$-actions
whose orbits have codimension$\zmei 1$.

For general questions on Lie groups actions see  \cite{PA},  for those on characteristic 
classes see \cite{MS} and finally \cite{GO} for questions on foliations.

\section{Preliminary results} \label{sec-2}
Let $V$ be a real vector space of dimension $n$ and let $\GG(k)$ be the 
Grassmann manifold
of $k$-planes  of $V$ (if necessary one will write $\GG(V,k)$ instead of $\GG(k)$). 
Given a $k'$-plane $F$ of $V$ set
$$N_F(k,r)=\{E\zpe\GG(k)\co \dim(E\zin F)=r\},\, r\zmai 1,\,\,{\rm{and}}\,\,
N_F(k)=\zung_{r\zmai 1}N_F(k,r).$$ 

Every $N_F(k,r)$ is a regular (embedded) submanifold of $\GG(k)$
of codimension $r(n+r-k-k')$.

Consider a manifold $P$ and a differentiable map $f\co P\zfl\GG(k)$. One will say that $f$
is {\em transverse} to $N_F(k)$ if it is transverse to each stratum $N_F(k,r)$, $r\zmai 1$.

\begin{lemma}\label{L21} One has:

\begin{enumerate} [label={\rm (\roman{*})}]
\item The set of those $F\zpe\GG(k')$ such that  $f$ is not transverse to $N_F(k)$
is of the first category in $\GG (k')$.
\item Assume that $P'$ is a subset of $P$ of the first category. If 
$\dim P\zmei r_0(n+r_0 -k-k')$, where $n,k,k',r_0 \zmai 0$, then the set
$$D=\{F\zpe\GG(k')\co {\rm there\, exists}\,\, p\zpe P' \, {\rm such\, that}\,  
\dim(f(p)\zin F)\zmai r_0 \}$$
is of the first category in $\GG(k')$.
\end{enumerate}
\end{lemma}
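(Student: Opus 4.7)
The plan is to realize both parts through the incidence submanifolds
\[
\Sigma_r := \{(p,F)\in P\times\GG(k') : \dim(f(p)\cap F)=r\},
\]
which are the pre-images under $\Psi(p,F):=(f(p),F)$ of the $\GL(V)$-orbits $Z_r=\{(E,F)\in\GG(k)\times\GG(k') : \dim(E\cap F)=r\}$. Since $\GL(V)$ acts transitively on both Grassmannians, each coordinate projection of $Z_r$ is a surjective submersion; hence $\Psi$ is automatically transverse to $Z_r$ (any $w\in T\GG(k)$ lifts to a tangent of $Z_r$), making every $\Sigma_r$ a submanifold of $P\times\GG(k')$ of codimension $r(n+r-k-k')$. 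Moreover the projection $\pi_P\colon\Sigma_r\to P$ is a submersion with fibre $\{p\}\times N_{f(p)}(k',r)$ over $p$, whereas the fibres of $\pi_{\GG}\colon\Sigma_r\to\GG(k')$ are $f^{-1}(N_F(k,r))\times\{F\}$.

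For part (i), $F$ is a regular value of $\pi_{\GG}$ exactly when $f^{-1}(N_F(k,r))$ has the expected codimension in $P$, i.e.\ when $f$ is transverse to $N_F(k,r)$. By Sard's theorem combined with the standard rank stratification of the critical set (each rank stratum maps into a submanifold of $\GG(k')$ of strictly smaller dimension, hence is locally nowhere dense), the set of critical values is of the first category. A countable union over $r\geq 1$ finishes (i).

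For part (ii), the hypothesis $\dim P\leq d:=r_0(n+r_0-k-k')$ yields
\[
\dim\Sigma_r = \dim P + \dim\GG(k') - r(n+r-k-k') \leq \dim\GG(k')
\]
for every $r\geq r_0$, with equality possible only when $r=r_0$ and $\dim P=d$. When the inequality is strict, the Sard-type argument of (i) already shows that $\pi_{\GG}(\Sigma_r)$ is first category. The main obstacle is the equidimensional case $r=r_0$, $\dim P=d$, where I would stratify $\Sigma_{r_0}$ by the rank of $d\pi_{\GG}$. On the critical locus the rank is $<\dim\GG(k')$, so its image is first category as in (i). On the regular locus $\pi_{\GG}$ is a local diffeomorphism; since $\pi_P$ is a submersion, $\pi_P^{-1}(P')$ is first category in $\Sigma_{r_0}$, and a local diffeomorphism of second-countable manifolds sends first-category sets to first-category sets (locally it is a homeomorphism). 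Assembling the contributions of the finitely many $r\geq r_0$ shows that $D$ is of the first category in $\GG(k')$.
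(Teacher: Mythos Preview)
The paper does not prove Lemma~\ref{L21} itself; it refers to \cite{TU}, pp.\ 271--272 and 287--288. Your incidence-manifold approach via $\Sigma_r=\Psi^{-1}(Z_r)$ is the standard one and is essentially what lies behind the cited argument: that $\Psi=(f,\mathrm{id})$ is automatically transverse to each $Z_r$ (because $Z_r\to\GG(k)$ is a submersion and the second factor of $\Psi$ is the identity), that $f\pitchfork N_F(k,r)$ iff $F$ is a regular value of $\pi_{\GG}|_{\Sigma_r}$ (parametric transversality), that $\pi_P$ is a submersion so $\pi_P^{-1}(P')$ is first category in $\Sigma_{r_0}$, and the local-diffeomorphism step in the equidimensional case --- all of this is correct. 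The monotonicity $r(n+r-k-k')\ge r_0(n+r_0-k-k')$ for $r\ge r_0$, which you use implicitly, holds because the hypothesis $\dim P\le r_0(n+r_0-k-k')$ forces $n+r_0-k-k'\ge 0$.

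One point to tighten: your parenthetical justification that the critical values form a first-category set (``each rank stratum maps into a submanifold of $\GG(k')$ of strictly smaller dimension'') is not correct as stated --- the rank strata of a smooth map are generally not submanifolds, nor are their images. The clean argument is Sard plus $\sigma$-compactness: the critical set $C\subset\Sigma_r$ is closed in a second-countable manifold, hence a countable union of compacta $K_i$; by Sard each $\pi_{\GG}(K_i)$ is a compact set of Lebesgue measure zero, hence closed with empty interior, hence nowhere dense. The same remark applies wherever you invoke ``the Sard-type argument'' for the strata with $\dim\Sigma_r<\dim\GG(k')$. With that fix your proof is complete.
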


Consider a foliation $\F$ of dimension $k$ on a $m$-manifold $M$. A transversal $T$ to 
$\F$ is called {\em neat} if no leaf of $\F$ intersects $T$ two or more times. A point
of $M$ is said {\em wandering} if it belongs to some neat transversal. The set $\W(\F)$ 
of all wandering points, that is the union of all neat transversals, is open and $\F$-saturated.

Now assume the existence of an action of $\RR^n$ on $M$ whose orbits are just the leaves
of $\F$. In this case the leaves of $\F$ are cylinders $C_r =\TT^r \zpor\RR^{k-r}$,
$r=0,\dots,k$. The number $r$ one will be called the {\em type of the leaf}. Assigning
to each point the type of the leaf passing through it defines a function
$\zt\co M\zfl\NN$, which is constant on the leaves of $\F$.

\begin{lemma}\label{L22}
The set of those points of $\W(\F)$ such that $\zt$ is constant around them is $\F$-saturated, 
open and dense in $\W(\F)$.
\end{lemma}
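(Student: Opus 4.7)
Since $\RR^n$ is abelian, for every $t\zpe\RR^n$ one has $\zG_{\phi(t,x)}=\zG_x$, so $\zt$ is constant on each leaf of $\F$; and as every $\phi(t,\zpu)$ is a diffeomorphism of $M$ preserving $\zt$, the set of points at which $\zt$ is locally constant is automatically $\F$-saturated, while openness is tautological. The content of the statement is thus density, which I plan to deduce from the following key claim: \emph{$\zt$ is lower semi-continuous on $\W(\F)$}, i.e.\ for each $x\zpe\W(\F)$ the inequality $\zt(y)\zmai\zt(x)$ holds throughout some neighborhood of $x$.

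To prove this, fix $x\zpe\W(\F)$ with $\zt(x)=r$ and a neat transversal $T$ through $x$. Write $\zG_x^{0}=\ker d\phi_x$ for the ``continuous'' part of the isotropy, a subspace of $\RR^n$ of dimension $n-k$, and pick any $k$-dimensional complement $V\zco\RR^n$ of $\zG_x^{0}$. By the structure of closed subgroups of $\RR^n$, $\zG_x\zin V$ is then a rank-$r$ lattice with some $\ZZ$-basis $u_1,\dots,u_r$. Consider the smooth map $F\co V\zpor T\zfl M$ defined by $F(v,y)=\phi(v,y)$. At each $(u_i,x)$ the differential $dF_{(u_i,x)}$ sends the $V$-factor isomorphically onto $T_xL_x$ (because $d\phi_x|_V$ is an isomorphism) and sends $T_xT$, through the foliation-preserving self-diffeomorphism $\phi(u_i,\zpu)$ of $M$ which fixes $x$, to a subspace complementary to $T_xL_x$; hence $F$ is a local diffeomorphism at each $(u_i,x)$.

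Shrinking neighborhoods, for every $y\zpe T$ sufficiently close to $x$ and each $i$ the equation $F(v,y')=y$ admits a unique solution $(\tilde u_i(y),y')$ near $(u_i,x)$. The relation $\phi(\tilde u_i(y),y')=y$ puts $y$ and $y'$ on a common orbit, and since both lie in the neat transversal $T$ this forces $y'=y$; in particular $\tilde u_i(y)\zpe\zG_y\zin V$ and $\tilde u_i(y)\zfl u_i$ as $y\zfl x$. For $y$ close enough, $\tilde u_1(y),\dots,\tilde u_r(y)$ remain $\RR$-linearly independent and so generate a rank-$r$ subgroup of the discrete lattice $\zG_y\zin V$, yielding $\zt(y)\zmai r=\zt(x)$. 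Since $\zt$ is constant on leaves, a foliation chart promotes this to lower semi-continuity on a full open neighborhood of $x$ in $M$. Density now follows in a standard way: in any nonempty open $U\zcoi\W(\F)$ the maximum $r_0=\max_U\zt$ is attained (the range of $\zt$ is contained in $\{0,1,\dots,k\}$) at some $x_0\zpe U$, and on a sufficiently small neighborhood $U_0\zcoi U$ of $x_0$ one has simultaneously $\zt\zmai r_0$ (lower semi-continuity) and $\zt\zmei r_0$ (maximality), so $\zt\zeq r_0$ on $U_0$. The principal obstacle I anticipate is verifying the local-diffeomorphism property at each $(u_i,x)$ and using neatness carefully to force $y'=y$ in the preimage; once these are in hand, everything else is a routine semi-continuity argument.
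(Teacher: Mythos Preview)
Your argument is correct. Note, however, that the paper itself does not prove Lemma~\ref{L22}: immediately after Theorem~\ref{T23} it refers the reader to \cite{TU}, pages 271--272 and 287--288, for the proofs of all three results stated in Section~\ref{sec-2}. So there is no in-paper argument against which to compare.

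Your route---establishing lower semi-continuity of $\zt$ on $\W(\F)$ by perturbing a $\ZZ$-basis of the lattice part of the isotropy along a neat transversal, then deducing density by passing to a local maximum of $\zt$---is a standard and sound one. The two places you flagged as obstacles are handled correctly: the local-diffeomorphism property of $F$ at each $(u_i,x)$ follows because $d\phi_x|_V$ is an isomorphism onto $T_xL_x$ while $d(\phi(u_i,\zpu))_x$ preserves $T_xL_x$ (hence sends its complement $T_xT$ to a complement), and neatness of $T$ then forces $y'=y$ exactly as you say. One small point worth making explicit is that, in order to conclude $\zt(y)\zmai r$ from the existence of $r$ linearly independent elements $\tilde u_i(y)\zpe\zG_y\zin V$, you need $\zG_y^0\zin V=\{0\}$ for $y$ near $x$, so that these elements remain independent modulo $\zG_y^0$; this holds because $y\mapsto\zG_y^0=\ker d\phi_y$ varies continuously in the Grassmannian (the rank of $d\phi_y$ being constantly $k$) and $\zG_x^0\zin V=\{0\}$ by construction.
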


\begin{theorem}\label{T23}
Assume that on a neighborhood of a point $p\zpe\W(\F)$ the function $\zt$ is constant.
Set $r=\zt(p)$. Then there exists an open set of $M$ identified to $D\zpor C_r$, where
$D$ is an open disc of $\RR^{m-k}$ centered at the origin, such that:
\begin{enumerate}[label={\rm (\alph{*})}]
\item
$D\zpor C_r$ is $\F$-saturated, every $\{x\}\zpor C_r$ is a leaf of $\F$ and $\{0\}\zpor C_r$
is that passing through $p$. 
\item
One can choose coordinates $(x,\zh,y)$ on $D\zpor C_r =D\zpor\TT^r \zpor\RR^{k-r}$ in such
a way that the fundamental vector fields of the action write  
$$X=\zsu_{j=1}^r f_j (x){\frac{\zpar} {\zpar\zh_j}}   
+\zsu_{\zlma=1}^{k-r} g_\zlma (x){\frac{\zpar} {y_\zlma}}\, .$$
\end{enumerate}
\end{theorem}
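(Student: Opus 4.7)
The plan is to trivialize the action near the orbit through $p$ using the wandering and constant-type hypotheses to straighten the variable stabilizer family, and then read off the form of the fundamental vector fields. Since $p$ is wandering, I would first fix a neat transversal $T\ni p$ identified with an open disc $D\zco\RR^{m-k}$ centered at the origin, and apply Lemma~\ref{L22} to assume $\zt\zeq r$ on $D$. Choose a $k$-dimensional subspace $W\zco\RR^n$ complementary to the Lie algebra $V_p$ of the stabilizer $H_p$; shrinking $D$ once more, $W$ stays complementary to every $V_x$, and then the stabilizer decomposes as $H_x=V_x\zdi\Gamma_x$ with $\Gamma_x:=W\zin H_x$ a rank-$r$ lattice in $W$.

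The key step is to produce a smooth frame of the family $\{\Gamma_x\}$. I would consider the map $\tilde\zF\co W\zpor D\zfl M$, $(w,x)\mapsto\zf_w(x)$, where $\zf$ denotes the action. Its differential is everywhere an isomorphism (by transversality of $W$ to each $V_x$ and of $D$ to the orbit), so $\tilde\zF$ is a local diffeomorphism; neatness of $T$ forces $\tilde\zF(w,x)\zpe T$ to imply $\tilde\zF(w,x)=x$ and $w\zpe\Gamma_x$. Starting from a $\ZZ$-basis $\zg_1,\dots,\zg_r$ of $\Gamma_p$, inverting $\tilde\zF$ locally at each $(\zg_i,p)$ yields smooth functions $\zg_i(x)\zpe W$ with $\zg_i(p)=\zg_i$ and $\zf_{\zg_i(x)}(x)=x$; by continuity together with the constant-rank hypothesis they form a $\ZZ$-basis of $\Gamma_x$ on a smaller $D$. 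Completing with constant vectors $\zg_{r+1},\dots,\zg_k\zpe W$ produces a smooth frame of $W$.

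Reparameterising $W$ by $s\zpe\RR^k$ via $w=\zsu_i s_i\zg_i(x)$ turns $\Gamma_x$ into the fixed subgroup $\ZZ^r\zpor\{0\}^{k-r}$, so $\tilde\zF$ descends to an injective local diffeomorphism $D\zpor C_r\zfl M$ (injectivity uses neatness one more time), and with coordinates $(x,\zh,y)$ on $D\zpor\TT^r\zpor\RR^{k-r}$ this establishes (a). For (b), given $e\zpe\RR^n$ decompose $e=v_x+w_x$ along $\RR^n=V_x\zdi W$ and expand $w_x=\zsu_i c_i(x)\zg_i(x)$. Commutativity of $\RR^n$ makes the stabilizer constant along each orbit, so $\zf_{tv_x}$ fixes the entire orbit through $x$, and therefore
$$\zf_{te}\zpizq\zf_{\zsu_i s_i\zg_i(x)}(x)\zpder=\zf_{\zsu_i(s_i+tc_i(x))\zg_i(x)}(x).$$
Differentiating at $t=0$ gives the required formula with $f_j(x)=c_j(x)$ for $j\zmei r$ and $g_\zlma(x)=c_{r+\zlma}(x)$.

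The principal obstacle is the smooth straightening of the lattice family $x\mapsto\Gamma_x$ in the second paragraph; this is exactly the step that uses both the wandering hypothesis (through the neatness of $T$, which makes $\tilde\zF$ behave like a covering over $T$) and the constant-type hypothesis (to guarantee that $\zg_i(x)$ remain a full $\ZZ$-basis of $\Gamma_x$). Once the smooth frame is in place, everything else reduces to a direct computation.
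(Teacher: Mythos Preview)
The paper does not actually prove Theorem~\ref{T23}; it refers the reader to \cite{TU}, pages 271--272 and 287--288.  Your argument is a correct self-contained proof and follows the natural line: parametrize nearby leaves by a neat transversal $D$, use the action map $\tilde\zF\co W\zpor D\zfl M$ (with $W$ a complement to the infinitesimal isotropy) to straighten the family of period lattices, and then compute the fundamental vector fields from the splitting $\RR^n=V_x\zdi W$.

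Two minor comments.  First, the appeal to Lemma~\ref{L22} is superfluous: the hypothesis of the theorem already gives $\zt$ constant near $p$, so simply shrink $D$.  Second, calling $\Gamma_x$ a ``rank-$r$ lattice in $W$'' is a slight abuse, since $\dim W=k\zmai r$; what you mean (and use) is that $\Gamma_x$ is a discrete subgroup of $W$ of rank $r$.  The point you single out as the principal obstacle---that the $\zg_i(x)$ generate all of $\Gamma_x$ and not merely a finite-index sublattice---is indeed the crux, and your outline handles it: neatness forces every $w\zpe\Gamma_x$ to lie in $\tilde\zF^{-1}(T)$, and since $\tilde\zF$ is a local diffeomorphism, over any compact region of $W$ containing a fundamental domain for $\Gamma_p$ the fibre of $\tilde\zF^{-1}(T)$ over $x$ consists, after shrinking $D$, only of the integer combinations of the $\zg_i(x)$.
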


For the proof of the foregoing results see \cite{TU} pages 271, 272, 287 and 288.

\section{Some  results on vector fields} \label{sec-3}
In this section one states some technical results needed later on.

\begin{lemma}\label{L31}
Let $N$ be a regular submanifold of a manifold $M$. Assume that $N$ is a closed subset 
of $M$. Consider a vector field $X$ on $N$. Then there exists a vector field $X'$ on $M$
that is complete on $M-N$ and equals $X$ on $N$.
\end{lemma}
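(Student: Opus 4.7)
The plan is to first extend $X$ to a smooth vector field $\tilde X$ on $M$, and then to multiply $\tilde X$ by a positive smooth function $\zf$ with $\zf|_N\zeq 1$ that slows the field down enough off $N$ to guarantee completeness of $\zf\tilde X$ on $M-N$.

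First, since $N$ is a closed regular submanifold of $M$, a tubular neighborhood $U$ of $N$ with projection $\zp\co U\zta N$ together with a partition of unity subordinate to $\{U,M-N\}$ yields a smooth extension $\tilde X$ of $X$ to $M$ (take $\zp^{*}X$ on $U$, $0$ on $M-N$, and combine via the partition).

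The key structural observation is that $\tilde X$ is tangent to $N$ along $N$, since $X(p)\zpe T_pN$ for every $p\zpe N$; the same holds for any rescaling $\zf\tilde X$ by a positive smooth $\zf$. By uniqueness of ODE solutions, $N$ is then invariant under the local flow of $\zf\tilde X$, so no trajectory of $\zf\tilde X$ starting at a point of $M-N$ can ever reach $N$. Consequently, completeness of $\zf\tilde X$ on $M-N$ is equivalent to the assertion that no trajectory starting in $M-N$ escapes every compact subset of $M$ in finite time.

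For the rescaling, fix a Riemannian metric on $M$, a proper smooth function $\zr\co M\zta [0,\zinf)$, and (using the closedness of $N$ together with a partition of unity) a smooth function $\zet\co M\zta [0,1]$ with $\zet^{-1}(0)=N$. I would look for $\zf$ of the form
\[
\zf=\frac{1}{1+\zet\cdot G},
\]
with $G\co M\zta (0,\zinf)$ smooth, built out of $\zr$ and $\|\tilde X\|$, so that $\zf|_N\zeq 1$ is automatic from $\zet|_N\zeq 0$, and so that along any trajectory $\zg$ of $\zf\tilde X$ in $M-N$ one obtains an estimate $|(\zr\zci\zg)'(t)|\zmei h(\zr\zci\zg(t))$ for some continuous positive $h$ on $[0,\zinf)$ with $\int^{\zinf}\!ds/h(s)=\zinf$. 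Comparison with the scalar ODE $r'=h(r)$ then prevents finite-time blow-up of $\zr\zci\zg$, and properness of $\zr$ gives completeness on $M-N$.

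The main obstacle is the construction of $\zf$ itself: the boundary condition $\zf|_N\zeq 1$ forbids a direct global rescaling by a uniformly decaying factor, and the estimate must simultaneously control trajectories wandering off to infinity of $M$ and trajectories that accumulate on $N$ while drifting to infinity along $N$. The auxiliary function $\zet$, vanishing exactly on $N$, is the device that reconciles these; once it is in place, the construction of a suitable $G$ is a routine but somewhat delicate bookkeeping with $\zr$ and $\|\tilde X\|$.
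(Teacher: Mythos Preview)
Your plan has a genuine gap in the key step, the Osgood-type estimate. You want a positive $h$ with $\int^\infty ds/h(s)=\infty$ and
\[
|(\rho\circ\gamma)'(t)|=|d\rho(\varphi\tilde X)(\gamma(t))|\le h(\rho(\gamma(t)))
\]
along every trajectory $\gamma$ in $M-N$. If $h$ is meant to be uniform (independent of $\gamma$), this is a pointwise inequality $|d\rho(\varphi\tilde X)|\le h(\rho)$ on all of $M-N$; by continuity (and since $N$ has empty interior) it then holds on $N$, where $\varphi\equiv 1$ and $\tilde X=X$. Thus $|d\rho(X)|\le h(\rho)$ on $N$, and the same comparison argument you invoke shows that $\rho$ stays bounded along every integral curve of $X$ in $N$ for finite time, i.e.\ $X$ is complete on $N$. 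But $X$ is an \emph{arbitrary} vector field on $N$; take for instance $N=\RR\subset M=\RR^2$ and $X=x^2\,\partial/\partial x$. So no such uniform $h$ exists in general, regardless of how cleverly you choose $\rho$, $\eta$, and $G$. If instead you allow $h$ to depend on $\gamma$, then ``there exists $h_\gamma$ with $\int ds/h_\gamma=\infty$ and $|(\rho\circ\gamma)'|\le h_\gamma(\rho\circ\gamma)$'' is just a restatement of ``$\rho\circ\gamma$ does not blow up in finite time'', which is precisely what you are trying to prove. Either way the argument does not close.

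The paper avoids this difficulty by a geometric choice of extension rather than by analysis. It takes a tubular neighbourhood $\pi\colon E\to N$ with a fibre metric, lifts $X$ to a vector field $Y$ on $E$ that is tangent to every sphere bundle $T_\varepsilon=\{\,\|e\|=\varepsilon\,\}$ and satisfies $\pi_*Y=X$, and then multiplies by a cutoff $\varphi$ supported in a set that shrinks radially as one goes to infinity along $N$ (using a compact exhaustion $A_k$ of $N$ and the sets $B_k=\pi^{-1}(A_k)\cap S_{1/k}$). The resulting $X'$ is still tangent to each $T_\varepsilon$ and has \emph{compact support} on each $T_\varepsilon$, so it is complete there; since $E-N=\bigcup_{\varepsilon>0}T_\varepsilon$ and $X'$ vanishes outside $E$, completeness on $M-N$ is immediate. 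The point is that tangency to the $T_\varepsilon$ confines trajectories to hypersurfaces on which the cutoff gives compact support, so no growth estimate on $X$ is ever needed.
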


\begin{proof}
Consider a tubular neighborhood $\zp\co E\zfl N$ of $N$ endowed, as vector bundle, with
a Riemannian metric.

Set $S_\ze =\{x\zpe E\co\| x\|<\ze\}$ and $T_\ze =\{x\zpe E\co\| x\|=\ze\}$, $\ze>0$.

Let $Y$ be a vector field on $E$ tangent to each $T_\ze$ and such that $\zp_* Y=X$
(its existence is obvious). Consider a family $\{A_k \}_{k\zmai 1}$ of open sets of $N$
such that every $\overline A_k$ is a compact set included in $A_{k+1}$ and
$\zung_{k\zmai 1} A_k =N$.

Set $B_k =\zp^{-1}(A_k )\zin S_{1/k}$; every $B_k$ is an open set of $E$, so of M, 
with compact adherence. Let $B=\zung_{k\zmai 1}B_k$.

Since manifolds are normal spaces, there is a closed set $C$ of $M$ such that
$C\zin N=\zva$ and $M-E\zco\zmontar{\circ}\over{C}$. Consider a function 
$\zf\co M\zfl[0,1]$ such that $\zf(N)=1$ and $\zf(C\zun (M-B))=0$. Now set
$X'=\zf Y$ on $E$ and $X'=0$ on $\zmontar{\circ}\over{C}$. As 
$E\,\zun\zmontar{\circ}\over{C} =M$ and $\zf$ vanishes on 
$E\,\zin\zmontar{\circ}\over{C}$ the vector field $X'$ is well defined; moreover $X'=X$ on $N$.

For finishing it is enough to show that $X'$ is complete on $E-N$ since it vanishes on
$\zmontar{\circ}\over{C}$. As $X'$ is tangent to each $T_\ze$, $\ze >0$, it suffices
to show that it is complete on $T_\ze$. But $X'|T_\ze$ has compact support because
$X'$ vanishes on $T_\ze -\zp^{-1}(A_k )$ for any $k$ such that $1/k<\ze$.
\end{proof}

\begin{corollary}\label{C32}
Consider a regular submanifold $N$ of a manifold $M$, where $N$ is a closed subset of $M$.
Let $X$ and $\zF_t$ be a
vector field on $N$ and its flow respectively. Assume that $\zF_1$ is defined on the whole $N$. 
Then $M$ and $(M-N)\zun\zF_1 (N)$ are diffeomorphic.
\end{corollary}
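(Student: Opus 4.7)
The plan is to apply Lemma~\ref{L31} to $X$ in order to extend it to a vector field $X'$ on $M$ that agrees with $X$ on $N$ and is complete on $M-N$. I would then show that the time-$1$ flow $\zQ_1$ of $X'$ is a diffeomorphism from $M$ onto $(M-N)\zun\zF_1(N)$.

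Because $X'|_N=X$ is tangent to the closed submanifold $N$, the integral curves of $X'$ starting in $N$ stay in $N$ (where they agree with those of $X$), while the integral curves starting in $M-N$ stay in $M-N$ and are defined for all time by completeness. The hypothesis that $\zF_1$ is defined on the whole of $N$, together with this completeness of $X'$ on $M-N$, ensures that $\zQ_1\co M\zfl M$ is well defined and smooth. Its image is $\zQ_1(M-N)\zun\zQ_1(N) = (M-N)\zun\zF_1(N)$, because $\zQ_1$ restricts to a diffeomorphism of $M-N$ (its inverse $\zQ_{-1}|_{M-N}$ being defined by completeness), and $\zQ_1|_N=\zF_1$ maps $N$ diffeomorphically onto $\zF_1(N)$ with inverse $\zF_{-1}|_{\zF_1(N)}$.

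Putting these two pieces together, $\zQ_1$ is a smooth bijection from $M$ onto $(M-N)\zun\zF_1(N)$: injectivity on $M$ follows from injectivity on each of $M-N$ and $N$ together with the disjointness of their images (one contained in $M-N$, the other in $N$), and a smooth inverse is given by $\zQ_{-1}$ (smooth on each of the two pieces of its domain). The step I expect to require the most care is the verification that $\zQ_1$ is actually defined on all of $M$ rather than merely on some open subset: this is precisely where the two ingredients of the construction—completeness of $X'$ on $M-N$ supplied by Lemma~\ref{L31}, and the hypothesis that $\zF_1$ is defined on all of $N$—combine, using that both $N$ and $M-N$ are invariant under the flow of $X'$.
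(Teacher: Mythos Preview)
Your proposal is correct and follows exactly the same approach as the paper: apply Lemma~\ref{L31} to obtain the extension $X'$, and take the time-$1$ map of its flow as the desired diffeomorphism. The paper's proof is a single sentence to this effect; your version simply spells out why the time-$1$ map is globally defined, bijective, and has smooth inverse, which are precisely the details the paper leaves implicit.
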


Indeed, let $X'$ be a vector field like in Lemma \ref{L31} and let $\zF'_t$ be its flow. Then
$\zF'_1 \co M\zfl (M-N)\zun\zF_1 (N)$ is a diffeomorphism.

\begin{lemma}\label{L33}
Given $a>0$ there exists a vector field $X$ on $\RR^m$ such 
that $\zF_1 (\RR^m )=B_a (0)$ where $\zF_t$ is the flow of $X$.
\end{lemma}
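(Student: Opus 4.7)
The plan is to exhibit an explicit radial vector field on $\RR^m$ whose forward flow contracts the whole space onto the open ball $B_a(0)$ in exactly one unit of time. The natural candidate I would take is
\[
X(x)=-\frac{|x|^2}{2a^2}\,x,
\]
which is smooth and $O(m)$-invariant; its trajectories therefore lie on rays through the origin, and the analysis reduces to a one-dimensional radial ODE.

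Writing $r(t)=|\zF_t(x_0)|$ and $r_0=|x_0|$, I would use the identity $(r^2)'=2\,x\zpu\dot x$ to obtain $\dot r=-r^3/(2a^2)$, an ODE that integrates explicitly to
\[
r(t)=\frac{r_0}{\sqrt{1+t\,r_0^2/a^2}}.
\]
Since the denominator stays positive for every $t\zmai 0$, the forward flow $\zF_t(x_0)$ is defined for every $t\zmai 0$ and every $x_0\zpe\RR^m$. At $t=1$ the radial factor $r_0\mapsto r_0/\sqrt{1+r_0^2/a^2}$ is a strictly increasing diffeomorphism of $[0,\zinf)$ onto $[0,a)$.

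Putting everything together, $\zF_1$ is the radial dilation $x\mapsto x/\sqrt{1+|x|^2/a^2}$, an injective smooth map of $\RR^m$ into itself whose image is precisely $\{x\zpe\RR^m\co|x|<a\}=B_a(0)$, which is exactly what is required. There is essentially no obstacle in the argument; the only point that deserves care, namely global existence of the forward flow up to time one, is taken care of by the explicit integration above.
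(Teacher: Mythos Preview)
Your proof is correct and follows the same essential idea as the paper's: reduce to a one-dimensional radial ODE whose time-one map sends $[0,\zinf)$ onto $[0,a)$. The paper carries this out by passing to polar coordinates $\RR^m-\{0\}\cong\RR^+\times S^{m-1}$ and choosing a non-positive function $g$ on $\RR^+$ (vanishing on $(0,1/4)$ and equal to $-u^2$ on $(1/3,\zinf)$) for the radial field $g(u)\,\zpar/\zpar u$; smoothness at the origin is arranged by making $g$ vanish near zero. Your choice $X(x)=-\lvert x\rvert^{2}x/(2a^{2})$ is a bit more direct: it is a single polynomial vector field on all of $\RR^m$, the resulting ODE $\dot r=-r^{3}/(2a^{2})$ integrates in closed form, and the time-one map is the explicit diffeomorphism $x\mapsto x/\sqrt{1+\lvert x\rvert^{2}/a^{2}}$ of $\RR^m$ onto $B_a(0)$. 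The two arguments are the same in spirit; yours simply avoids the coordinate change and the patching near the origin.
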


By taking into account the diffeomorphism 
$x\zpe\RR^m -\{0\}\zfl(\| x\| /a,x/\| x\|)\zpe\RR^+ \zpor S^{m-1}$,
for proving this result it suffices to exhibit a vector field 
$Z=g(u)\zpar/\zpar u$ on $\RR^+$ vanishing near zero such that 
$\tilde\zF_1 (\RR^+ )=(0,1)$, where  $\tilde\zF_t$ is the flow of $Z$.
For example consider a non-positive function $g\co\RR^+ \zfl\RR$ such that
$g((0,1/4))=0$ and $g(u)=-u^2$ on $(1/3,\zinf)$.
\medskip

In the next result, balls in $\RR^m$ centered at the origin and the radius $r$ are
denoted $B_r$ (open) or ${\overline B}_r$ (closed) and those in $\RR^n$ centered at 
the origin $B'_r$ and ${\overline B'}_r$ respectively. Similarly, $S_r$ and $S'_r$ are
the spheres of radius $r$ centered at the origin. On the other hand,
$(x,y)$ will be a point of $\RR^m\zpor\RR^n$.

\begin{lemma}\label{L34}
Given real numbers $0<c<a$ and $0<d<b$ set
$$A=B_a \zpor B'_b -\{0\}\zpor\left( B'_b -B'_d \right)
\quad {\rm and}\quad
A'=A-{\overline B}_c \zpor S'_d.$$

Then there exist $\ze>0$ and a diffeomorphism $\zf\co A\zfl A'$  such that $\zf(x,y)=(x,y)$
if $\| x\|\zmai a-\ze$ or $\| y\|\zmai b-\ze$ (or both).
\end{lemma}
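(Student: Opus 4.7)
My plan is to realise $\zf$ as the time-$1$ flow of a vector field on $A$, via Corollary \ref{C32}. First observe that $(0,y)$ with $\|y\|=d$ is already absent from $A$, so $A\setminus A' = (\overline B_c - \{0\})\zpor S'_d$. The natural candidate for the closed submanifold in Corollary \ref{C32} is
$$N := (B_a - \{0\})\zpor S'_d \zco A,$$
a regular submanifold of codimension one. Its closedness in $A$ follows because any accumulation point $(x_\zinf, y_\zinf)\zpe A$ of a sequence in $N$ satisfies $\|y_\zinf\|=d$, and $x_\zinf = 0$ would then contradict $(x_\zinf,y_\zinf)\zpe A$.

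Next I would build a vector field $\tilde Z$ on $N$, acting trivially on the $S'_d$-factor and radially in the $x$-factor: $\tilde Z(x,y) = g(\|x\|)\,x/\|x\|$, where $g\co(0,a)\zfl[0,\zinf)$ is smooth, positive on $(0,a-\ze)$, vanishes on $[a-\ze,a)$, and satisfies $\zil_0^c du/g(u)=1$. In polar coordinates on $B_a-\{0\}$, the integral condition---in the spirit of Lemma \ref{L33}---is exactly what forces the radial time-$1$ flow to extend to a diffeomorphism $(0,a)\zfl(c,a)$ sending $0^+$ to $c$ and equal to the identity on $[a-\ze,a)$, so the time-$1$ flow of $\tilde Z$ maps $N$ diffeomorphically onto $(B_a-\overline B_c)\zpor S'_d$. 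A direct set-theoretic calculation then gives
$$(A-N)\zun\tilde\zF_1(N) = \{(x,y)\zpe A : \|y\|\znoi d\}\zun\left((B_a-\overline B_c)\zpor S'_d\right) = A',$$
so Corollary \ref{C32} produces a diffeomorphism between $A$ and $A'$.

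The remaining point, which requires the most care, is the prescribed support condition on $\zf$. Using the construction of Lemma \ref{L31}, I would choose a tubular neighborhood of $N$ of the form $E := A\zin\{\,|\|y\|-d| < \zd\,\}$ with $0 < \zd < b - d - \ze$, and take the lift of $\tilde Z$ to $E$ to be $Y(x,y) := g(\|x\|)\,(x/\|x\|,0)$. This $Y$ is tangent to each spherical level in the normal direction and inherits the vanishing of $g$ on $\{\|x\| \zmai a - \ze\}$. Composing with the bump-function construction of Lemma \ref{L31} produces an extension $X'$ of $\tilde Z$ to $A$ supported in $E \zin \{\|x\| < a - \ze\}$; by the choice of $\zd$, this support is disjoint from $\{\|x\| \zmai a - \ze\} \zun \{\|y\| \zmai b - \ze\}$, so the time-$1$ flow $\zf = \Phi'_1$ of $X'$ is the identity on that set, as required.
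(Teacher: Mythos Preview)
Your approach is correct and essentially the same as the paper's: both put a radial vector field on $N=(B_a-\{0\})\times S'_d$, extend it to $A$ via Lemma~\ref{L31}, and take the time-$1$ flow, the only cosmetic difference being that the paper controls the support by a final multiplication with a bump $h$ whereas you fold that control into the choice of tubular neighborhood and lift. One small slip: your set $E=A\cap\{\,|\,\|y\|-d\,|<\delta\,\}$ is not a tubular neighborhood of $N$, since it contains the points $(0,y)$ with $d-\delta<\|y\|<d$, which lie in $A$ but outside any normal-bundle chart of $N$ and at which your formula $Y(x,y)=g(\|x\|)\,x/\|x\|$ is undefined; replacing $E$ by $(B_a-\{0\})\times\{\,d-\delta<\|y\|<d+\delta\,\}$ (which is open in $A$ and genuinely a tubular neighborhood) makes your argument go through verbatim.
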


\begin{proof}
First observe that there exist $0<\ze<\min\{(a-c)/2, (b-d)/2\}$ 
and a vector field $X$ on $B_a -\{0\}$, whose
flow $\zF_t$ is defined everywhere for $t=1$, such that:
\begin{itemize}
\item $\zF_1 (B_a -\{0\})=B_a -{\overline B}_c$,
\item $X$ vanishes on $B_a -B_{a-2\ze}$.
\end{itemize}

Indeed, identify $\RR^m -\{0\}$ to $\RR^+ \zpor S^{m-1}$ in the usual way
($x\zfl (\| x\|,x/\| x\|)$) and endow this last space with the product coordinates
$(u,v)$. Then set $X=g(u)\zpar/\zpar u$ where $g\co (0,a)\zfl\RR$ is a non-negative
function, $g((0,c])=c$ and $g([a-2\ze,a))=0$.

Now consider $X$ as a vector field on $(B_a -\{0\})\zpor S'_d$ tangent to the first factor.
Then its flow $\zF_t$ is defined everywhere for $t=1$ and:
\begin{itemize}
\item $\zF_1 ((B_a -\{0\})\zpor S'_d)=(B_a -{\overline B}_c )\zpor S'_d$,
\item $X$ vanishes on $(B_a -B_{a-2\ze})\zpor S'_d$.
\end{itemize}

Since $(B_a -\{0\})\zpor S'_d$ is both a closed subset and a 
regular submanifold of $A$, by Lemma \ref{L31}
the vector field $X$ extends to a vector field $X'$ on $A$, which is complete outside
$(B_a -\{0\})\zpor S'_d$.

Let $h\co A\zfl[0,1]$ be a function such that:
\begin{itemize}
\item $h(x,y)=1$ if $\| x\|\zmei a-2\ze$ and $\| y\|\zmei b-2\ze$,  
\item $h(x,y)=0$ if $\| x\|\zmai a-\ze$ or $\| y\|\zmai b-\ze$.
\end{itemize}
 
 Set $X''=hX'$ and let $\zF''_t$ be its flow. Then $X''=X$ on  $(B_a -\{0\})\zpor S'_d$.
Moreover $X''$ is complete outside  $(B_a -\{0\})\zpor S'_d$, it vanishes if
$\| x\|\zmai a-\ze$ or $\| y\|\zmai b-\ze$, and $\zF''_1 (A)=A'$. Therefore for finishing
the proof set $\zf=\zF''_1$.
\end{proof}

\section{Proof of Theorem \ref{T11}} \label{sec-4}
Consider an action on the left of $\RR^n$ on a connected manifold $M$ of dimension $m$
and rank $k<m$ (i.e. $M$ is not a cylinder). Let $V$ be the Lie algebra of $\RR^n$ and 
let $X_v$, $v\zpe V$, be the fundamental vector field on $M$ associated to $v$.

For each subset $V'$ of $V$ and each point $p\zpe M$ set
$V'(p)=\{X_v (p)\co v\zpe V'\}$.

As usual, the infinitesimal isotropy of a point $p$ is the set $I(p)=\{v\zpe V\co X_v (p)=0\}$.
Since $V$ is abelian, $I(p)$ only depends on the orbit of $p$.

Denote by $\zS_r$, $r=0,\dots,m$, the set of those points of $M$ whose orbit has dimension $r$; 
that is $p\zpe\zS_r$ if and only if $\dim I(p)=n-r$. Let $h_r \co\zS_r \zfl\GG(n-r)$ be the map
given by $h_r (p)=I(p)$.

A chart $(U,x_1 ,\dots,x_m )$ is said {\em $\zS_r$-adapted}  if the image of $U$ on
$\RR^m$ is a product of open intervals and there exist vectors 
$v_1 ,\dots,v_r \zpe V$ such that $X_{v_j}=\zpar/\zpar x_j$, $j=1,\dots,r$.

It is easily seen that every point of $\zS_r$ belongs to the domain of some $\zS_r$-adapted
chart. Therefore $\zS_r$ can be covered by a countable family of $\zS_r$-adapted charts.
(Here countable includes the finite case.)

\begin{lemma}\label{L41}
For every $r=0,\dots,m$ the map $h_r \co\zS_r \zfl\GG(n-r)$ is differentiable of rank
$\zmei m-r$, i.e. it can be locally extended to a differentiable map of rank $\zmei m-r$
at every point of its domain.
\end{lemma}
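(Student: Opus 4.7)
The plan is to produce a local smooth extension of $h_r$ explicitly from a $\zS_r$-adapted chart, and then read off the rank bound from the fact that the resulting formula involves only $m-r$ of the $m$ coordinates.

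Fix $p\zpe\zS_r$ and a $\zS_r$-adapted chart $(U,x_1,\dots,x_m)$ with vectors $v_1,\dots,v_r\zpe V$ satisfying $X_{v_j}=\zpar/\zpar x_j$. Complete $v_1,\dots,v_r$ to a basis $v_1,\dots,v_n$ of $V$ and write, for $i>r$,
\[
X_{v_i}=\zsu_{j=1}^{m}a_{ij}(x)\frac{\zpar}{\zpar x_j}
\]
on $U$. Since $\RR^n$ is abelian we have $[X_{v_j},X_{v_i}]=0$, and for $j\zmei r$ this reads $\zpar a_{ik}/\zpar x_j=0$ for all $k$. Hence \emph{each $a_{ij}$ depends only on $x_{r+1},\dots,x_m$}. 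The $n-r$ elements
\[
w_i(q)=v_i-\zsu_{j=1}^{r}a_{ij}(q)\,v_j,\qquad i=r+1,\dots,n,
\]
are linearly independent in $V$ for every $q\zpe U$ (their $v_{r+1},\dots,v_n$-components are the standard basis), so
\[
\tilde h\co U\zflg \GG(n-r),\qquad \tilde h(q)=\operatorname{span}\{w_{r+1}(q),\dots,w_n(q)\},
\]
is a well-defined smooth map.

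Next I would check that $\tilde h$ extends $h_r$ on $\zS_r\zin U$. A point $q\zpe U$ lies in $\zS_r$ precisely when the matrix $(X_{v_1}(q),\dots,X_{v_n}(q))$ has rank $r$; in view of $X_{v_j}=\zpar/\zpar x_j$ for $j\zmei r$ this is equivalent to $a_{ij}(q)=0$ for all $i,j>r$. Under this condition $X_{w_i}(q)=X_{v_i}(q)-\zsu_{j=1}^{r}a_{ij}(q)X_{v_j}(q)=0$, so $w_{r+1}(q),\dots,w_n(q)\zpe I(q)$; as $\dim I(q)=n-r$ they form a basis of it, and therefore $\tilde h(q)=I(q)=h_r(q)$.

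Finally, because all coefficients $a_{ij}(x)$ (for $i>r$) are independent of $x_1,\dots,x_r$, the map $\tilde h$ factors as $U\zflg \RR^{m-r}\zflg\GG(n-r)$, $(x_1,\dots,x_m)\zflg (x_{r+1},\dots,x_m)$, and thus has rank $\zmei m-r$ everywhere on $U$. Since every point of $\zS_r$ lies in the domain of such an adapted chart, this yields the required local smooth extension at every point of $\zS_r$. I do not expect any genuine obstacle: the whole argument hinges on the single observation that commutativity of the action forces the coefficients $a_{ij}$ to lose dependence on the first $r$ coordinates, which is immediate once the adapted chart is in place.
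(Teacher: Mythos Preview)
Your proof is correct and follows essentially the same route as the paper: both take a $\zS_r$-adapted chart, use commutativity to see that the coefficients of $X_{v_i}$ ($i>r$) depend only on $x_{r+1},\dots,x_m$, and then define the extension $\tilde h$ by sending $q$ to the $(n-r)$-plane spanned by $v_i-\zsu_{j=1}^r a_{ij}(q)v_j$, reading off the rank bound from the factorization through $\RR^{m-r}$. The only cosmetic difference is that the paper writes $\tilde h$ explicitly in the standard affine chart of $\GG(n-r)$, whereas you describe the same plane as a span.
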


\begin{proof}
It suffices to show that $h_r \co U\zin\zS_r \zfl\GG(n-r)$ is differentiable and of rank $\zmei m-r$
for any $\zS_r$-adapted chart $(U,x_1 ,\dots,x_m )$. Consider vectors $v_1 ,\dots,v_r \zpe V$ 
such that $X_{v_j}=\zpar/\zpar x_j$, $j=1,\dots,r$ and choose $v_{r+1} ,\dots,v_n \zpe V$ in
a such a way that $\{v_1 ,\dots,v_n\}$ is a basis of  $V$. Then each
$X_{v_i}=\zsu_{\zlma=1}^m f_{i\zlma}\zpar/\zpar x_\zlma$, $i=r+1,\dots,n$.

As $V$ is an abelian Lie algebra functions $f_{i\zlma}$ only depend on $(x_{r+1},\dots,x_m)$.

Now consider the open set $A\zco\GG(n-r)$ of those $(n-r)$-planes that have as a basis
$$\left\{v_{r+1}-\zsu_{\zlma=1}^r a_{r+1 \zlma}v_\zlma ,\dots,
v_{n}-\zsu_{\zlma=1}^r a_{n \zlma}v_\zlma \right\},$$
where $a_{i\zlma}$, $i=r+1,\dots,n$, $\zlma=1,\dots,r$, are real numbers. Then
$$\left(A, (a_{i\zlma}), i=r+1,\dots,n, \zlma=1,\dots,r\right)$$
is a system of coordinates of $\GG(n-r)$. 

Let $\tilde h_r \co U \zfl A\zco\GG(n-r)$ be the differentiable map defined in coordinates 
by $\tilde h_r (x)=(f_{i\zlma}(x))$, $i=r+1,\dots,n$, $\zlma=1,\dots,r$. Then $\tilde h_r$, which
only depends on $(x_{r+1},\dots,x_m )$, extends $h_r \co U\zin\zS_r \zfl \GG(n-r)$
since if $x\zpe U\zin\zS_r$, then $V(x)$ is spanned by $\zpar/\zpar x_1 ,\dots,\zpar/\zpar x_r$
and 
$$\left\{v_{r+1}-\zsu_{\zlma=1}^r f_{r+1 \zlma}(x)v_\zlma ,\dots,
v_{n}-\zsu_{\zlma=1}^r f_{n \zlma}(x)v_\zlma \right\}$$
is a basis of $I(x)$.
\end{proof}

\begin{proposition}\label{P42}
There always exists some orbit of dimension $\zmei (m+k)/2$. 
\end{proposition}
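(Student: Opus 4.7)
The plan is to argue by contradiction. Suppose every orbit has dimension strictly greater than $(m+k)/2$, and let $d$ denote the minimum orbit dimension, so $d > (m+k)/2 \ge k$ (using $k \le m$), whence $d \ge k+1$. The strategy is to construct $k+1$ commuting vector fields on $M$ that are linearly independent at every point, contradicting the hypothesis that the rank of $M$ equals $k$. Since $V$ is abelian, any $(k+1)$-dimensional subspace $F \subset V$ produces, via a basis, $k+1$ pairwise commuting fundamental vector fields, and they are linearly independent at a point $p$ precisely when $F \cap I(p) = \{0\}$. Thus it is enough to exhibit a single $F \in \GG(V, k+1)$ satisfying $F \cap I(p) = \{0\}$ for every $p \in M$.

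To produce such an $F$ I would show that the ``bad'' set in $\GG(V, k+1)$ is of the first category. Decompose $M$ as the finite union $\bigcup_{r \ge d} \zS_r$ and cover each $\zS_r$ by countably many $\zS_r$-adapted charts. On each such chart $U$, the proof of Lemma \ref{L41} shows that $h_r|_{U \cap \zS_r}$ factors through a smooth map defined on a domain in $\RR^{m-r}$, so its image in $\GG(n-r)$ is contained in a submanifold of dimension at most $m - r$. For any fixed $(n-r)$-plane $E$, the Schubert set $\{F \in \GG(V, k+1): F \cap E \ne \{0\}\}$ has codimension $r - k$ in $\GG(V, k+1)$ (positive because $r \ge d \ge k+1$), by the codimension formula of Section~\ref{sec-2} with $r_0 = 1$. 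A direct dimension count, or Lemma~\ref{L21} applied to the factored map, then bounds the chart-bad set in $\GG(V, k+1)$ by dimension at most $(m - r) + \dim \GG(V, k+1) - (r - k) = \dim \GG(V, k+1) + (m + k - 2r)$.

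Because $r > (m+k)/2$, this is strictly less than $\dim \GG(V, k+1)$, so the chart-bad set is nowhere dense; taking the countable union over the charts and the finite union over $r \ge d$, the total bad set remains of the first category, and Baire's theorem supplies an $F$ outside it, yielding the contradiction. The step I expect to require the most care is the dimension count itself, which balances the rank bound $m - r$ from Lemma \ref{L41} against the Schubert codimension $r - k$ and produces the sharp threshold $r > (m+k)/2$; once this is secured, the remainder is a standard genericity-plus-Baire argument exploiting the abelian structure of $V$.
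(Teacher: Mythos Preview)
Your proposal is correct and follows essentially the same route as the paper: assume all orbits have dimension $r>(m+k)/2$, cover $M$ by countably many $\Sigma_r$-adapted charts, and use the rank bound $m-r$ from Lemma~\ref{L41} together with the Schubert codimension $r-k$ to find, by a Baire argument, a $(k+1)$-plane $F\subset V$ with $F\cap I(p)=\{0\}$ everywhere, contradicting $\operatorname{rank}M=k$. The only difference is cosmetic: the paper packages the genericity step through transversality via part~(i) of Lemma~\ref{L21} (for a generic $F$ the extension $\tilde h_r$ is transverse to $N_F(n-r)$, and the inequality $m-r<r-k$ then forces $\tilde h_r^{-1}(N_F(n-r))=\varnothing$), whereas you phrase it as a direct dimension count on the bad set in $\GG(k+1)$---note that if you invoke Lemma~\ref{L21} here it is part~(i) you want, since part~(ii) requires the exceptional set $P'$ to be of the first category.
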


\begin{proof}
Assume that all the orbits have dimension $>(m+k)/2$. Let $(U,x_1 ,\dots,x_m )$ be a 
$\zS_r$-adapted chart, $r>(m+k)/2$, and let $\tilde h_r \co U\zfl\GG(n-r)$ be like in the proof of
Lemma \ref{L41}. Consider $F\zpe\GG(k+1)$ such that $\tilde h_r$ is transverse to
$N_F (n-r)$. Then $Im(\tilde h_r )\zin N_F (n-r)=\zva$; indeed, otherwise as the rank of
$\tilde h_r \zmei m-r$ and the smallest codimension among the strata of $N_F (n-r)$ is
that of $N_F (n-r,1)$ one has
$$m-r\zmai {\rm codim}\, N_F (n-r,1)=r-k,$$
hence $r\zmei (m+k)/2$, {\em contradiction}. 

Observe that from $Im(\tilde h_r )\zin N_F (n-r)=\zva$ follows
$I(q)\zin F=\{0\}$, that is to say $\dim F(q)=k+1$, for every $q\zpe U\zin\zS_r$.

But $M=\zung_{r>(m+k)/2}\zS_r$ may be covered by a countable family of adapted charts, and
assertion (i) of Lemma \ref{L21} allows us to choose $F\zpe\GG(k+1)$ such that each map 
$\tilde h_r$  associated to this countable family is transverse to $N_F (n-r)$. Therefore
$\dim F(p)=k+1$ for all $p\zpe M$ and ${\rm rank}M\zmai k+1$, {\em contradiction}. 
\end{proof}

Proposition \ref{P42} is due to D. Simen \cite{SI}; see \cite{MT8} as well.

\begin{remark}\label{R43}
In fact one has proved two things slightly stronger:
\begin{itemize}
\item In the proposition above the rank of $M$ can be replaced by the file.
\item Proposition \ref{P42} still holds for local actions.
\end{itemize}
\end{remark}

{\em From now on and until the end of Section \ref{sec-4} one will assume that there is no
orbit of dimension $<(m+k)/2$.We will reach a contradiction.} 
More precisely, one will construct an open subset of $M$ of rank $\zmai k+1$ that is
diffeomorphic to $M$.

Since by Proposition \ref{P42} there exist orbits of
dimension $\zmei (m+k)/2$ necessarily $m+k$ is even. Let $s=(m+k)/2$.

\subsection{Existence of suitable elements in $\GG(k+1)$} \label{sec-41}

On the open set ${\zmontar{\circ}\over{\zS}_s} \zco M$ the action of $\RR^n$ defines a 
foliation $\F$ of dimension $s$ and codimension $(m-k)/2$. Let $\W$ be the set of its
wandering points and let $\U$ be  the set of those points of $\W$ around of which the type
of the leaf is constant. By Lemma \ref{L22}, $\U$ is $\F$-saturated, open and dense in $\W$,
so $\W -\U$ is of the first category in $\W$.

The aim of this subsection is to prove the following result:

\begin{proposition}\label{P44}
There  exists $F\zpe\GG(k+1)$ such that:

\begin{enumerate} [label={\rm (\arabic{*})}]
\item  $h_s \co{\zmontar{\circ}\over{\zS}_s}\zfl\GG(n-s)$ is transverse
to $N_F (n-s)$ and $h^{-1}_s (N_F (n-s,\zlma))=\zva$ for every $\zlma\zmai 2$.
Therefore  $h^{-1}_s (N_F (n-s))= h^{-1}_s (N_F (n-s,1))$.
\item $h^{-1}_s (N_F (n-s))= h^{-1}_s (N_F (n-s,1))$ is the singular set of $F$, i.e.
the set of those points $p\zpe M$ such that  $\dim F(p)<k+1$.

\noindent Moreover, if $\dim F(q)<k+1$ for some $q\zpe M$ then $\dim F(q)=k$ and $q\zpe \U$.
\item   $h^{-1}_s (N_F (n-s))= h^{-1}_s (N_F (n-s,1))$ is empty or, both a closed subset and a 
regular submanifold of $M$ of dimension $s$ which is included in $\U$. Moreover
$h^{-1}_s (N_F (n-s))$ is saturated for the action of $\RR^n$. 
\end{enumerate}
\end{proposition}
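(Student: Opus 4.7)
The plan is to choose $F\zpe\GG(k+1)$ outside several first-category subsets of $\GG(k+1)$, via Lemma \ref{L21}, and then verify the three conclusions by combining transversality with elementary dimension counts. Concretely, for every $r=s,s+1,\dots,m$ I cover $\zS_r$ by a countable family of $\zS_r$-adapted charts and consider the smooth extensions $\tilde h_r\co U\zfl\GG(n-r)$ built in the proof of Lemma \ref{L41}; since $\tilde h_r$ depends only on $(x_{r+1},\dots,x_m)$, its rank equals $m-r$. Applying Lemma \ref{L21}(i) to this countable collection removes a first-category set of $F$'s and makes every $\tilde h_r$ transverse to $N_F(n-r)$. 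For $r>s$ one has $m-r<r-k\zmei\text{codim}\,N_F(n-r,\zlma)$ for every $\zlma\zmai 1$ (the first inequality is equivalent to $r>(m+k)/2=s$), so rank strictly below codimension forces $\tilde h_r^{-1}(N_F(n-r))=\zva$ and rules out singular points of $F$ in $\zS_r$. For $r=s$ the same comparison with $\zlma\zmai 2$---where $\text{codim}\,N_F(n-s,\zlma)\zmai m-k+2>(m-k)/2=m-s$---empties $\tilde h_s^{-1}(N_F(n-s,\zlma))$ on every chart, yielding Property 1 and forcing $\dim F(q)=k$ at every singular $q$.

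The $\zlma=1$ preimage $h_s^{-1}(N_F(n-s,1))$ is then, by transversality and the rank-codimension equality $m-s=s-k$, a regular $s$-dimensional submanifold of $\mathring{\zS}_s$; it is $\RR^n$-saturated because $h_s$ is orbit-constant, closed in $\mathring{\zS}_s$ because $N_F(n-s,1)$ is closed in $\GG(n-s)$, and closed in $M$ by ruling out accumulation both in $\zung_{r>s}\zS_r$ (by the previous dimension count) and in $\zS_s\setminus\mathring{\zS}_s$ (by continuity of $h_s$ on $\zS_s$ combined with the emptiness of the higher-$\zlma$ preimages). This settles Properties 1 and 3 modulo the inclusion of the preimage in $\U$, which is the substantive content of Property 2 and the main obstacle.

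On the wandering stratum the inclusion in $\U$ can be handled as follows: cover $\W$ by a countable family of neat transversals $T_j$ of $\F$, each of dimension $m-s=(m-k)/2$; the restriction $h_s|_{T_j}$ is smooth and, by Lemma \ref{L22}, $T_j\zin(\W\setminus\U)$ is of first category in $T_j$ (since $\U$ is $\F$-saturated, open and dense in $\W$). Apply Lemma \ref{L21}(ii) with $P=T_j$, $P'=T_j\zin(\W\setminus\U)$, $r_0=1$, $k'=k+1$: the numerical hypothesis $\dim T_j=(m-k)/2\zmei s-k$ is met with equality, so outside another first-category set of $F$'s no point of $\W\setminus\U$ is singular. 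The hardest case, which I expect to be the principal technical obstacle, is $\mathring{\zS}_s\setminus\W$, where no neat transversal is available. The plan here is to exploit the recurrence of non-wandering leaves: on any minimal $\F$-invariant subset of $\mathring{\zS}_s$, continuity of $h_s$ combined with density of a leaf forces $h_s$ to be constant, so that $h_s(\mathring{\zS}_s\setminus\W)$ should be confined to a subset of $\GG(n-s)$ small enough for one further generic-position argument to apply. Removing the resulting countable union of first-category subsets of $\GG(k+1)$ then yields the desired $F$.
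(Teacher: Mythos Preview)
Your approach follows the paper's outline closely---remove first-category sets via Lemma~\ref{L21} and then do dimension counts---but there are two genuine gaps.

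\textbf{The boundary $\zS_s\setminus\mathring{\zS}_s$.}  Your closedness argument there does not work: continuity of $h_s$ on $\zS_s$ together with emptiness of the $\zlma\zmai 2$ preimages only shows that any limit point $p\zpe\zpar\zS_s$ satisfies $h_s(p)\zpe N_F(n-s,1)$, i.e.\ that $p$ \emph{is} singular---exactly the opposite of what you need.  You must instead remove another first-category set of $F$'s to ensure no singular point lies in $\zpar\zS_s$.  The paper does this (Lemma~\ref{L45}, Corollary~\ref{C46}) by the very mechanism you used for $\W\setminus\U$: on a $\zS_s$-adapted chart $U$, the set $U\zin\zpar\zS_s$ is first category and saturated by the slices $x_{s+1}=\mathrm{const},\dots,x_m=\mathrm{const}$, so Lemma~\ref{L21}(ii) applies to its trace on a single transversal of dimension $(m-k)/2$.

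\textbf{The non-wandering set $\mathring{\zS}_s\setminus\W$.}  This is where your plan actually misses the key idea.  You propose to study minimal sets and argue that $h_s$ is constant there; but even if that sketch could be made precise, you would still need a first-category statement in $\GG(k+1)$, and no mechanism for producing one is given.  The paper bypasses this entirely with a short local observation: if $p\zpe h_s^{-1}(N_F(n-s,1))$ then, by transversality to a stratum of codimension $s-k=m-s$, the rank of $h_s$ at $p$ equals $m-s$.  Since $(h_s)_*$ kills $T_p\F$, the restriction of $h_s$ to a small transversal $T$ through $p$ is an immersion, hence injective.  But $h_s$ is leaf-constant, so if a leaf met $T$ twice, $h_s|_T$ would not be injective.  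Thus $T$ is neat and $p\zpe\W$ automatically.  Combined with the already-handled exclusion of $\W\setminus\U$, this gives $p\zpe\U$ without any separate analysis of the non-wandering set.
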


An element of $\GG(k+1)$ like in the proposition above will be called {\em suitable}.

Consider a $\zS_s$-adapted chart $(U,x_1 ,\dots,x_m )$ and a map 
$\tilde h_s \co U\zfl\GG(n-s)$ like in the proof of Lemma \ref{L41}. Let $C$ be a subset
of $U\zin\zS_s$.

\begin{lemma}\label{L45}
Assume that:

\begin{enumerate} [label={\rm (\arabic{*})}]
\item $C$ is of the first category in $U$.
\item If $p\zpe C$ then any point $q\zpe U$ such that $q_\zlma =p_\zlma$, 
$\zlma=s+1,\dots,m$, belongs to $C$ as well.
\end{enumerate}
Then the set
$$D=\{F\zpe\GG(k+1)\co {\rm there\, exists}\,\, p\zpe C \, {\rm such\, that}\,  
\dim(h_s (p)\zin F)\zmai 1 \}$$
is of the first category in $\GG(k+1)$.
\end{lemma}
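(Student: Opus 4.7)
The plan is to reduce the lemma directly to an application of Lemma~\ref{L21}(ii), using $\tilde h_s$ in place of the map $f$ there. The key preparatory observation is that both $\tilde h_s$ (by its construction in the proof of Lemma~\ref{L41}) and the set $C$ (by hypothesis~(2)) are invariant under translation in the first $s$ coordinates, so they descend to the quotient $U'$ obtained by forgetting those coordinates.

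Concretely, let $\pi\colon U\to U'$ be the projection onto $(x_{s+1},\dots,x_m)$. From the proof of Lemma~\ref{L41}, $\tilde h_s$ depends only on $(x_{s+1},\dots,x_m)$, so it factors as $\tilde h_s=\bar h_s\circ\pi$ for a smooth map $\bar h_s\colon U'\to\GG(n-s)$. Hypothesis~(2) says precisely that $C$ is $\pi$-saturated, i.e.\ $C=\pi^{-1}(C')$ with $C':=\pi(C)$. Since $h_s$ agrees with $\tilde h_s$ on $\Sigma_s\cap U$, the set $D$ in the statement rewrites as
\[
D=\{F\in\GG(k+1)\colon \exists\, p'\in C' \text{ with } \dim(\bar h_s(p')\cap F)\geq 1\}.
\]

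Next, I would verify that $C'$ is of first category in $U'$. Identifying $U\cong U'\times I$ with $I$ the product of the first $s$ open intervals, hypothesis~(2) becomes $C=C'\times I$. By the Kuratowski--Ulam theorem, meagerness of $C'\times I$ in $U'\times I$ forces meagerness of $C'$ in $U'$: the slice $(C'\times I)_{x'}$ equals $I$ for $x'\in C'$ and $\emptyset$ otherwise, and since $I$ is non-meager in itself, ``almost every section meager'' forces $C'$ itself to be meager.

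Finally, I would invoke Lemma~\ref{L21}(ii) for $\bar h_s\colon U'\to\GG(n-s)$, with meager subset $C'$, $r_0=1$, and $(k+1)$-planes (so in the notation of Lemma~\ref{L21} the codomain parameter is $n-s$ and $k'=k+1$). The dimensional hypothesis reads
\[
\dim U'=m-s\ \leq\ 1\cdot\bigl(n+1-(n-s)-(k+1)\bigr)=s-k,
\]
which holds with equality because $s=(m+k)/2$ gives $m-s=(m-k)/2=s-k$. The conclusion of Lemma~\ref{L21}(ii) is precisely that $D$ is of first category in $\GG(k+1)$. The only step requiring any genuine care is the Kuratowski--Ulam descent from $C$ to $C'$; the rest is bookkeeping, and it is noteworthy that the dimensional inequality is tight exactly because $s$ is the critical value $(m+k)/2$.
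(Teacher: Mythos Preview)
Your proof is correct and follows essentially the same route as the paper: both arguments pass to the $(m-s)$-dimensional space determined by the last coordinates (you via the quotient $U'$, the paper via a fixed transversal $T=\{x_1=c_1,\dots,x_s=c_s\}$) and then apply Lemma~\ref{L21}(ii) with $r_0=1$, verifying the identical dimensional inequality $m-s\le s-k$. The only difference is that you justify the meagerness of $C'$ explicitly via Kuratowski--Ulam, whereas the paper simply asserts that $T\cap C$ is of first category in $T$ as a consequence of Assumption~(2); your version is a bit more careful on this point but otherwise the two proofs coincide.
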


\begin{proof}
As $h_s$ does not depend on $(x_1 ,\dots,x_s )$, the set $D$ does not change if one
replaces the statement $p\zpe C$ by $p\zpe T\zin C$, where $T$ is a transversal defined by
$x_1 =c_1 ,\dots,x_s =c_s$ for suitable $c_1 ,\dots,c_s \zpe\RR$.
By Assumption (2), $T\zin C$ is of the first category in $T$.

By Lemma \ref{L21} Part (ii) applied to $\tilde h_s \co T\zfl\GG(n-s)$ and $T\zin C$, it
suffices to check that
$$\dim T\zmei (n+1-(n-s)-(k+1))=(m-k)/2$$
which is just the case since $\dim T=(m-k)/2$.
\end{proof}

By covering $\zS_s -{\zmontar{\circ}\over{\zS}_s}$ by a countable family of $\zS_s$-adapted
charts from Lemma \ref{L45} follows:

\begin{corollary}\label{C46}
The set
$$D_1 =\{F\zpe\GG(k+1)\co {\rm there\, exists}\,\, p\zpe\zS_s -{\zmontar{\circ}\over{\zS}_s} 
 \, {\rm such\, that}\,  \dim(h_s (p)\zin F)\zmai 1 \}$$
is of the first category in $\GG(k+1)$.
\end{corollary}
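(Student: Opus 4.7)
My plan is to reduce Corollary \ref{C46} to a countable application of Lemma \ref{L45}. First, I would cover $\zS_s -{\zmontar{\circ}\over{\zS}_s}$ by a countable family $\{U_i\}_{i\zpe\NN}$ of $\zS_s$-adapted charts, which is possible by the observation stated just before Lemma \ref{L41}, and set $C_i =U_i \zin(\zS_s -{\zmontar{\circ}\over{\zS}_s})$. If I can verify hypotheses (1) and (2) of Lemma \ref{L45} for each $C_i$, then every
$$D_i=\{F\zpe\GG(k+1)\co\text{there exists }p\zpe C_i\text{ with }\dim(h_s(p)\zin F)\zmai 1\}$$
is of the first category in $\GG(k+1)$, and the conclusion follows at once from $D_1 =\zung_i D_i$ being a countable union of meagre sets.

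For hypothesis (2), note that $\zmontar{\circ}\over{\zS}_s$ is both open and $\RR^n$-invariant (the action permutes orbits while preserving their dimension), hence $\zS_s -{\zmontar{\circ}\over{\zS}_s}$ is saturated by orbits. Inside an $\zS_s$-adapted chart the vector fields $\zpar/\zpar x_1,\dots,\zpar/\zpar x_s$ are fundamental, so any two points of $U_i$ with identical last $m-s$ coordinates lie on a common $\RR^n$-orbit; thus $p\zpe C_i$ forces every $q\zpe U_i$ with $q_\zlma=p_\zlma$ for $\zlma>s$ to belong to $C_i$.

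For hypothesis (1), I would reuse the computation from the proof of Lemma \ref{L41}: after completing $v_1,\dots,v_s$ to a basis of $V$ and using that $V$ is abelian, each $X_{v_i}$ with $i>s$ can be written as
$$X_{v_i}=\zsu_{j=1}^s a_{ij}(y)\,\frac{\zpar}{\zpar x_j}+\zsu_{\zlma=1}^{m-s}b_{i\zlma}(y)\,\frac{\zpar}{\zpar y_\zlma},$$
where $y=(x_{s+1},\dots,x_m)$ and the coefficients are independent of $x$. Then $U_i \zin\zS_s$ is the locus $\{(x,y)\co b_{i\zlma}(y)=0\text{ for all }i,\zlma\}$, which has the product form $B\zpor Z$ with $B$ the open box in $x$ and $Z$ a closed subset of the transversal $y$-box. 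Since $\zmontar{\circ}\over{\zS}_s$ is open in $M$, taking interiors gives $U_i \zin{\zmontar{\circ}\over{\zS}_s}=B\zpor{\rm int}(Z)$, and consequently $C_i =B\zpor\zpar Z$. The topological boundary $\zpar Z$ of any closed set is closed with empty interior, hence nowhere dense, so $C_i$ is of the first category in $U_i$.

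With both hypotheses in place, Lemma \ref{L45} applied to each chart yields meagre $D_i$, and a countable union of meagre sets is meagre, completing the argument. The only moderately subtle step is the product decomposition $U_i \zin\zS_s =B\zpor Z$ with $Z$ closed, which pins down $C_i$ as $B$ times the boundary of $Z$; beyond this routine bookkeeping I do not anticipate any genuine obstacle.
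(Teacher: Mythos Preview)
Your proposal is correct and follows the same approach as the paper: cover $\zS_s -{\zmontar{\circ}\over{\zS}_s}$ by countably many $\zS_s$-adapted charts and apply Lemma~\ref{L45} in each, then take the countable union. The paper states this in a single sentence without checking the hypotheses of Lemma~\ref{L45}; your added verification---in particular the product decomposition $U_i\zin\zS_s=B\zpor Z$ with $Z$ closed in the transversal box, whence $C_i=B\zpor\zpar Z$ is nowhere dense---is a correct and helpful expansion of what the paper leaves implicit.
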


By Lemma \ref{L22} the set $\W-\U$ is of the first category in $\W$ and $\F$-saturated.
Therefore by covering $\W-\U$ by a countable family of $\zS_s$-adapted charts
whose domains are included in $\W$ from Lemma \ref{L45} follows:

\begin{corollary}\label{C47}
The set
$$D_2 =\{F\zpe\GG(k+1)\co {\rm there\, exists}\,\, p\zpe\W-\U 
 \, {\rm such\, that}\,  \dim(h_s (p)\zin F)\zmai 1 \}$$
is of the first category in $\GG(k+1)$.
\end{corollary}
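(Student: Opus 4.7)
The plan is to mirror the one-line argument used for Corollary \ref{C46}. I cover $\W - \U$ by a countable family of $\zS_s$-adapted charts whose domains are contained in $\W$, apply Lemma \ref{L45} on each chart separately, and conclude by the fact that a countable union of first-category subsets of $\GG(k+1)$ is still of the first category.

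Concretely, since $\W$ is open in ${\zmontar{\circ}\over{\zS}_s}$, every point of $\W - \U$ lies in some $\zS_s$-adapted chart whose domain is contained in $\W$; by second countability of $M$ I extract a countable subfamily $\{U_i\}_{i \geq 1}$ whose union covers $\W - \U$. Setting $C_i = U_i \cap (\W - \U)$, I verify the two hypotheses of Lemma \ref{L45} on the pair $(U_i, C_i)$. For hypothesis (1), Lemma \ref{L22} asserts that $\W - \U$ is of the first category in $\W$, and since $U_i$ is open in $\W$ the same holds for $C_i$ inside $U_i$. For hypothesis (2), in a $\zS_s$-adapted chart one has $X_{v_j} = \partial/\partial x_j$ for $j = 1, \ldots, s$, so the plaques of the orbit foliation $\F$ are precisely the level sets of $(x_{s+1}, \ldots, x_m)$; hence two points of $U_i$ sharing those last $m - s$ coordinates lie on a common $\F$-leaf, and since $\W - \U$ is $\F$-saturated (again Lemma \ref{L22}), one belongs to $C_i$ if and only if the other does.

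Lemma \ref{L45} then yields that each
\[
D_2^i = \{F \in \GG(k+1) : \text{there exists } p \in C_i \text{ with } \dim(h_s(p) \cap F) \geq 1\}
\]
is of the first category in $\GG(k+1)$, whence $D_2 = \bigcup_{i \geq 1} D_2^i$ is too. I do not foresee any substantive obstacle; the whole argument rests on Lemma \ref{L22}, which supplies both ingredients — first category of $\W - \U$ in $\W$ and $\F$-saturation of $\W - \U$ — that are needed to feed Lemma \ref{L45}.
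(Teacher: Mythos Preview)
Your proposal is correct and follows exactly the paper's own argument: cover $\W-\U$ by countably many $\zS_s$-adapted charts with domains in $\W$, use Lemma~\ref{L22} to supply both the first-category condition and the $\F$-saturation needed for hypotheses (1) and (2) of Lemma~\ref{L45}, and take the countable union. You have simply spelled out the details that the paper leaves implicit.
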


\begin{lemma}\label{L48}
There exists a set $D_3 \zco\GG(k+1)$ of the first category such that if
$F\zpe \GG(k+1) - D_3$ then $\dim F(p)=k+1$ for every 
$p\zpe\zung_{r>s}\zS_r$.
\end{lemma}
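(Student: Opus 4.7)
The plan is to combine the rank bound from Lemma \ref{L41} with the transversality genericity furnished by Lemma \ref{L21}(i), exploiting the hypothesis $r>s=(m+k)/2$ to force an empty intersection with every stratum $N_F (n-r,\zlma)$, $\zlma\zmai 1$.

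First I would reformulate the condition to be achieved. For $p\zpe\zS_r$ the map $v\zfl X_v (p)$ sends $F$ linearly onto $F(p)$ with kernel $F\zin I(p)=F\zin h_r (p)$, so $\dim F(p)=k+1$ is equivalent to $h_r (p)\znope N_F (n-r)$. It therefore suffices to construct a first-category $D_3 \zco\GG(k+1)$ such that for $F\znope D_3$ one has $h_r (\zS_r )\zin N_F (n-r)=\zva$ for every $r>s$.

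Next, for each integer $r$ with $r>s$, I would cover $\zS_r$ by a countable family of $\zS_r$-adapted charts $\{U_{r,i}\}_i$ and let $\tilde h_r^{(i)}\co U_{r,i}\zfl\GG(n-r)$ denote the differentiable extensions of rank $\zmei m-r$ obtained in the proof of Lemma \ref{L41}. By Lemma \ref{L21}(i) the set
$$D_r^{(i)}=\{F\zpe\GG(k+1)\co \tilde h_r^{(i)}\ \text{is not transverse to}\ N_F (n-r)\}$$
is of the first category in $\GG(k+1)$, so $D_3 =\zung_{r>s}\zung_i D_r^{(i)}$ is of the first category as a countable union.

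Finally I would verify the claim for $F\znope D_3$. The stratum $N_F (n-r,\zlma)\zco\GG(n-r)$ has codimension $\zlma(r-k-1+\zlma)\zmai r-k$, while the rank of $\tilde h_r^{(i)}$ is at most $m-r$ at every point. The inequality $r>s$ rewrites as $r-k>m-r$, so at every point of $U_{r,i}$ the rank of $\tilde h_r^{(i)}$ is strictly smaller than the codimension of each stratum $N_F (n-r,\zlma)$. A smooth map whose rank is strictly less than the codimension of a submanifold of its target can be transverse to that submanifold only where its image avoids it, so transversality forces $(\tilde h_r^{(i)})^{-1}(N_F (n-r,\zlma))=\zva$ for every $\zlma\zmai 1$. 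Hence $h_r (p)\znope N_F (n-r)$ for every $p\zpe U_{r,i}\zin\zS_r$, and since the charts cover $\zS_r$ and $r>s$ is arbitrary, $\dim F(p)=k+1$ for every $p\zpe\zung_{r>s}\zS_r$.

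The only real obstacle is the rank/codimension comparison $m-r<r-k$; it is the mirror of the count used in Proposition \ref{P42}, with the strict inequality reversed so that the preimage is forced to be empty rather than merely lower-dimensional. Everything else is routine bookkeeping with countable unions of first-category sets.
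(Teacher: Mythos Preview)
Your proof is correct and follows essentially the same route as the paper: cover each $\zS_r$ with $r>s$ by countably many $\zS_r$-adapted charts, apply Lemma~\ref{L21}(i) to each $\tilde h_r$ to get a first-category exceptional set, take the countable union, and use the comparison $m-r<r-k$ between the rank bound of Lemma~\ref{L41} and the minimal stratum codimension to force $\tilde h_r^{-1}(N_F(n-r))=\zva$ whenever $F$ avoids the exceptional set. Your explicit reformulation $\dim F(p)=k+1\iff h_r(p)\znope N_F(n-r)$ via the kernel $F\zin I(p)$ is a helpful clarification that the paper leaves implicit.
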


\begin{proof}
Consider a $\zS_r$-adapted chart, $r>s$, $(U,x_1 ,\dots,x_m )$ and let
$\tilde h_r \co U\zfl\GG(n-r)$ be like in the proof of Lemma \ref{L41}.
By Lemma \ref{L21} Part (i) the set
$$E=\{F\zpe\GG(k+1)\co\tilde h_r \co U\zfl\GG(n-r)\, {\rm is\, not\, transverse\, to}\, 
N_F (n-r)\}$$
is of the first category.

As the minimal codimension of the strata of $N_F (n-r)$ is $r-k$, 
$\rm{rank}\, \tilde h_r \zmei m-r$ and $m-r <r-k$ since $r>s$, if $\tilde h_r$ is transverse
to $N_F (n-r)$ then $\tilde h_r (U)\zin N_F (n-r)=\zva$. Therefore if $F\znope E$ then
$\dim F(p)=k+1$ for any $p\zpe U\zin\zS_r$.

Finally, since $\zung_{r>s}\zS_r$ can be covered by a countable family of adapted charts, 
it suffices to set $D_3$ equal to the union of the sets $E$ associated to the elements
of the family.
\end{proof}

Let $D_4 =\{F\zpe\GG(k+1)\co h_s \co{\zmontar{\circ}\over{\zS}_s}     
\zfl\GG(n-s)\, {\rm is\, not\, transverse\, to}\, N_F (n-s)\}$

We claim that any $F\zpe\GG(k+1)-D_1 \zun D_2 \zun D_3 \zun D_4$ is suitable.

Indeed, the first part of Assertion (1) of Proposition \ref{P44} is obvious and for the second one
it is enough to remark that the rank of $h_s \zmei m-s=(m-k)/2$ while, if $\zlma\zmai 2$, 
the codimension of $N_F (n-s,\zlma)\zmai m-k+2$.

Now suppose that $\dim F(p)\zmei k$ for some $p\zpe M$. Then from Corollary \ref{C46} and
Lemma \ref{L48} follows that $p\zpe{\zmontar{\circ}\over{\zS}_s}$; therefore
$p\zpe h^{-1}_s (N_F (n-s))= h^{-1}_s (N_F (n-s,1))$ and $\dim F(p)=k$. Now it is clear that
$h^{-1}_s (N_F (n-s))$ is the singular set of $F$ and, as a consequence, a closed subset 
of $M$.

Since $h_s$ is transverse to $N_F (n-s,1)$ and the codimension of this last submanifold equals
$s-k=m-s$, it follows that $h^{-1}_s (N_F (n-s))= h^{-1}_s (N_F (n-s,1))$ is a regular
submanifold  of $M$ of dimension $s$ (or empty).

For finishing one has to show that if $p\zpe h^{-1}_s (N_F (n-s))$ then $p\zpe\U$.
First observe that if $T$ is a small transversal to $\F$ passing through $p$ then $h_s |T$
is injective (indeed, $(h_s)_* (T_p \F)=0$ and ${\rm rank}\, h_s =m-s$). As $h_s$ is
constant along the leaves of $\F$, if a leaf intersects $T$ more than one time then
 $h_s |T$ is not injective {\em contradiction}. Thus $T$ is neat and $p\zpe\W$. 
Finally,  $p\zpe\U$ because $p\znope\W-\U$ by Corollary \ref{C47}.

\subsection{A construction for the suitable elements of $\GG(k+1)$} \label{sec-42}
Consider a suitable $F\zpe\GG(k+1)$. If $h_s^{-1}(N_F (n-s))=\zva$ then $\dim F(p)=k+1$
everywhere and ${\rm rank} M\zmai k+1$, {\em contradiction}. Therefore assume 
$h_s^{-1}(N_F (n-s))\znoi\zva$ from now on. Let $\{P_\zl \}$, $\zl\zpe L$, be the family
of connected component of  $h_s^{-1}(N_F (n-s))$. Note that $L$ is countable and 
non-empty, and every $P_\zl$ a closed subset and a
regular submanifold of $M$ of dimension $s$.
Moreover each $P_\zl$ is a cylinder of type let us say $r_\zl$.

By Theorem \ref{T23}, for every $\zl\zpe L$ one may identify an open set $A_\zl$ of $M$
including $P_\zl$ with $D^\zl \zpor C_{r_\zl}$, in such a way that
$P_\zl =\{0\}\zpor C_{r_\zl}$ and the fundamental vector fields of the action write
$$X=\zsu_{j=1}^{r_\zl} f_j^{\zl} (x){\frac{\zpar} {\zpar\zh_j}}   
+\zsu_{\zlma=1}^{s-r_\zl} g_\zlma^{\zl} (x){\frac{\zpar} {y_\zlma}}\, .$$

(Here one makes use of coordinates $(x,\zh,y)$ on 
$\RR^{m-s}\zpor\TT^{r_\zl}\zpor\RR^{s-r_{\zl}}$ instead of $(x^\zl ,\zh^\zl ,y^\zl )$
for avoiding an over-elaborate notation.)

\begin{remark}\label{R49}
In the non-compact case, given two different elements $\zl,\zm$ of $L$ it can happen that
$A_\zl \zin A_\zm \znoi\zva$ regardless of the size of the disks $D^\zl$ and $D^\zm$.
For instance, in $\RR^2 -\{0\}$ consider a complete vector field $Z=\zf(x)\zpar/\zpar x_2$
with no zeroes and the action of $\RR$ associated. Then the ``cylinders'' $\{0\}\zpor\RR^+$
and $\{0\}\zpor\RR^-$ cannot be separated by any couple of disks.
\end{remark}

The next step will be to choose a suitable compact set in each $P_\zl$ and separate them.
Consider an injective map $\za\co L\zfl\ZZ$ and the continuous map
$$\zb\co h^{-1}_s (N_F (n-s))=\zung_{\zl\zpe L}P_\zl\zfl\RR$$
that equals $\za(\zl)$ on each $P_\zl$. As $M$ is a normal space $\zb$ prolongs to a
continuous map $\tilde\zb \co M\zfl\RR$. Set 
$\tilde A_\zl =\tilde\zb^{-1}((\za(\zl)-1/4,\za(\zl)+1/4))$. Then $\{\tilde A_\zl \}$,
$\zl\zpe L$, is a  locally finite family of disjoint open sets 
and $P_\zl \zco\tilde A_\zl$, $\zl\zpe L$.

Set $L_0 =\{\zl\zpe L\co r_\zl <s\}$. In each $P_\zl$ consider the compact set 
$K_\zl =\{0\}\zpor\TT^{r_\zl}\zpor\{0\}$ if $\zl\zpe L_0$ and 
$K_\zl =\{0\}\zpor\TT^s$ if $\zl\znope L_0$.
Then $K_\zl \zco \tilde A_\zl$ and, since $K_\zl$ is compact, there are two balls
$B_{a_\zl}(0)\zco\RR^{m-s}$ and $B'_{b_\zl}(0)\zco\RR^{s-r_\zl}$ such that the open set
$A'_\zl =B_{a_\zl}(0)\zpor\TT^{r_\zl}\zpor B'_{b_\zl}(0)$ if $\zl\zpe L_0$, or
$A'_\zl =B_{a_\zl}(0)\zpor\TT^s$ if $\zl\znope L_0$, is includes in $\tilde A_\zl$.
Therefore (thought as subsets of $M$) 
$A'_\zl \zin A'_\zm =\zva$ if $\zl\znoi\zm$.

Finally observe that, up to homotheties centered at the origin of $\RR^{m-s}$ and
$\RR^{s-r_\zl}$ respectively, one may suppose, without loss of generality, that 
$A'_\zl =B_{3}(0)\zpor\TT^{r_\zl}\zpor B'_{3}(0)$ if $\zl\zpe L_0$ and that
$A'_\zl =B_{3}(0)\zpor\TT^s$ if $\zl\znope L_0$.

Clearly, $\{A'_\zl \}$, $\zl\zpe L$, is a locally finite family

\subsection{End of the proof of Theorem \ref{T11}} \label{sec-43}
First one will construct an open subset of $M$ which is diffeomorphic to $M$. 
For every $\zl\zpe L_0$ let $\widetilde X_\zl$ be
a vector field on $\RR^{s-r_\zl}$ such that 
$\widetilde\zF^\zl_1 (\RR^{s-r_\zl})=B'_1 (0)$ where $\widetilde\zF^\zl_t$ is the
flow of $\widetilde X_\zl$ (its existence is assured by Lemma \ref{L33}).
In a natural way $\widetilde X_\zl$ induces a vector field $X_\zl$ on 
$P_\zl =\{0\}\zpor\TT^{r_\zl}\zpor\RR^{s-r_\zl}$ such that 
$\zF^\zl_1 (\{0\}\zpor\TT^{r_\zl}\zpor\RR^{s-r_\zl})=
\{0\}\zpor\TT^{r_\zl}\zpor B'_1 (0)$ where $\zF^\zl_t$ is the flow of $X_\zl$.
 
Let $X$ be the vector field on $\zung_{\zl\zpe L_0}P_\zl$ that equals $X_\zl$ on each
$P_\zl$, $\zl\zpe L_0$. Observe that $\zung_{\zl\zpe L_0}P_\zl$ is a closed subset and a regular
submanifold of $M$; therefore from Corollary \ref{C32} applied to $X$ follows that
$$M_1 =\left(M-\zung_{\zl\zpe L_0}P_\zl\right)\zung\left(\zung_{\zl\zpe L_0}
(\{0\}\zpor\TT^{r_\zl}\zpor B'_1 (0))\right),$$
where each $\{0\}\zpor\TT^{r_\zl}\zpor B'_1 (0)$ has to be regarded as a subset
of $P_\zl$, is diffeomorphic to $M$.

Now $A'_\zl \zin M_1 =B_{3}(0)\zpor\TT^{r_\zl}\zpor B'_{3}(0)-
\{0\}\zpor\TT^{r_\zl}\zpor (B'_{3}(0)-B'_1(0))$ if $\zl\zpe L_0$ and
$A'_\zl \zin M_1=B_{3}(0)\zpor\TT^{s}$ if $\zl\znope L_0$. 
 
Moreover the set of singular points of $F$ belonging to $A'_\zl \zin M_1$ equals
$\{0\}\zpor\TT^{r_\zl}\zpor B'_1 (0)$ when $\zl\zpe L_0$ and 
$\{0\}\zpor\TT^s$ if $\zl\znope L_0$.

By Lemma \ref{L34} there exist $\ze>0$ and a diffeomorphism between 
$$E_\zl =B_3 (0)\zpor B'_3 (0) -\{0\}\zpor (B'_3 (0)-B'_1 (0))$$ 
and $E'_\zl =E_\zl -{\overline B}_1 (0) \zpor S'_1$, $\zl\zpe L_0$, which equals the identity if
$\| x\|\zmai 3-\ze$ or if $\| y\|\zmai 3-\ze$.

Therefore there exists a diffeomorphism between
$$\widetilde E_\zl =B_3 (0)\zpor\TT^{r_\zl}\zpor B'_3 (0) 
-\{0\}\zpor\TT^{r_\zl}\zpor (B'_3 (0)-B'_1 (0))$$ and
 $\widetilde E'_\zl =\widetilde E_\zl -{\overline B}_1 (0) \zpor\TT^{r_\zl}\zpor S'_1$,
which equals the identity if $\| x\|\zmai 3-\ze$ or if $\| y\|\zmai 3-\ze$.

Set 
$$M_2  =\left( M_1 -\zung_{\zl\zpe L_0}\widetilde E_\zl \right)
\zung\left( \zung_{\zl\zpe L_0}\widetilde E'_\zl \right)$$
Then $M_2$ is an open set of $M$ that is diffeomorphic to $M_1$, so to $M$. Indeed,
the diffeomorphism above between $\widetilde E_\zl$ and $\widetilde E'_\zl$ for every
$\zl\zpe L_0$ extends by the identity on 
$$M_1  -\zung_{\zl\zpe L_0}\widetilde E_\zl =M_2  -\zung_{\zl\zpe L_0}\widetilde E'_\zl$$
to a diffeomorphism between $M_1$ and $M_2$.

Observe that now
$$A'_\zl \zin M_2 =B_3 (0)\zpor\TT^{r_\zl}\zpor B'_3 (0) 
-\left[(\{0\}\zpor\TT^{r_\zl}\zpor (B'_3 (0)-B'_1 (0)))   
\zun({\overline B}_1 (0) \zpor\TT^{r_\zl}\zpor S'_1 ) \right] $$
if $\zl\zpe L_0$ and $A'_\zl \zin M_2=B_{3}(0)\zpor\TT^{s}$ if $\zl\znope L_0$. 

Besides, as before, the set of singular points of $F$ belonging to $A'_\zl \zin M_2$ equals
$\{0\}\zpor\TT^{r_\zl}\zpor B'_1 (0)$ when $\zl\zpe L_0$ and 
$\{0\}\zpor\TT^s$ otherwise.

Observe that $\overline B_{1/2}(0)\zpor\TT^{r_\zl}\zpor B'_1 (0)$, $\zl\zpe L_0$, and
$\overline B_{1/2}(0)\zpor\TT^s$, $\zl\znope L_0$, are closed subsets of $M_2$.

The next step will be to modify the (infinitesimal) action of $F$  in order to eliminate its singularities
in $M_2$. Thus for every $\zl\zpe L$ we need to modify the action on $A'_\zl \zin M_2$.
Note that these modifications are compatible among them if their supports are contained in
$B_{1/2}(0)\zpor\TT^{r_\zl}\zpor B'_1 (0)$, $\zl\zpe L_0$, or in
$ B_{1/2}(0)\zpor\TT^s$, $\zl\znope L_0$.

First assume that $\zl\zpe L_0$. Let $Y_1 ,\dots, Y_s$ be the vector fields on $A'_\zl \zin M_2$
defined by $Y_1 =\zpar/\zpar \zh_1$, ..., $Y_{r_\zl} =\zpar/\zpar \zh_{r_\zl}$,
$Y_{r_\zl +1}=\zpar/\zpar y_1$, ..., $Y_s =\zpar/\zpar y_{s-r_\zl}$. Then on $A'_\zl \zin M_2$
the fundamental vector fields of the action write 
$\zsu_{j=1}^s h_j (x)Y_j$.

Let $\{e_1 ,\dots, e_{k+1}\}$ be  a basis of $F$ such that $e_{k+1}$ is a basis of
$F\zin h_s (\{0\}\zpor\TT^{r_\zl}\zpor B'_1 (0))$. (Recall that $h_s$ is constant on
$\{0\}\zpor\TT^{r_\zl}\zpor B'_1 (0)$.) Then there exist vectors
$u_1 ,\dots, u_{s-k} \zpe V$ such that the vector subspace spanned by 
$\{e_1 ,\dots, e_{k}, u_1 ,\dots, u_{s-k}\}$ is supplementary to
$h_s (\{0\}\zpor\TT^{r_\zl}\zpor B'_1 (0))$. Therefore the vector fields $Z_1 =X_{e_1}$,
..., $Z_k =X_{e_k}$, $Z_{k+1}=X_{u_1}$, ..., $Z_s=X_{u_{s-k}}$ are linearly independent
at every point of $\{0\}\zpor\TT^{r_\zl}\zpor B'_1 (0)$.

On the other hand, $Z_\zlma =\zsu _{j=1}^s p_{\zlma j}(x)Y_j$, $\zlma=1,\dots, s$, where
the matrix $(p_{\zlma j}(0))$, $\zlma,j=1,\dots,s$, is invertible. For each $x\zpe B_{1/2}(0)$
set $P(x)=(p_{\zlma j}(x))$, $\zlma,j=1,\dots,s$. 

As $P(0)$ is invertible, there are $\ze>0$ small enough and a family of $s\zpor s$ matrices
$Q(x)=(q_{\zlma j}(x))$, $x\zpe  B_{1/2}(0)$, depending differentiably on $x$ such that:

\begin{enumerate} [label={\rm (\arabic{*})}]
\item Every $Q(x)$, $x\zpe  B_{1/2}(0)$, is invertible.
\item $Q(x)=P(x)^{-1}$ if $\|x\|\zmei\ze$.
\item $Q(x)=Id$ if $\|x\|\zmai 2\ze$.
\end{enumerate}

On $B_{1/2}(0)\zpor\TT^{r_\zl}\zpor B'_1 (0))$ one modifies the infinitesimal action of $V$
by replacing each $$X_v =\zsu_{j=1}^s h_j^v Y_j ,$$ $v\zpe V$, by
$$X_v^Q =\zsu_{j=1}^s \left(\zsu_{\zlma=1}^s h_\zlma^v  q_{\zlma j}\right) Y_j .$$ 

Obviously ``being Abelian'' is not lost because $Q$ only depends on $x$.

Since $Q=Id$ on $B_{1/2}(0)-B_{2\ze}(0)$, the modified action extends to the whole $M_2$.
Moreover the isotropy of the modified action equals that of the original one.

Observe that on $B_\ze (0)\zpor\TT^{r_\zl}\zpor B'_1 (0)$
with the modified action the vector field corresponding to $e_j$, $j=1,\dots,k$,
is $Y_j$. Thus one may suppose, without loss of generality, that on
$B_{\ze}(0)\zpor\TT^{r_\zl}\zpor B'_1 (0)$ the vector fields $X_{e_1},\dots,X_{e_k}$
do not depend on $x$, so they commute with $\zpar/\zpar x_1$.

Consider a function $\zf\co B_{1/2}(0)\zfl\RR$ such that 
$\zf(B_{1/2}(0)-B_{\ze/2}(0))=0$ and $\zf(0)=1$. Then on 
$B_{1/2}(0)\zpor\TT^{r_\zl}\zpor B'_1 (0))$ one modifies the action of $F$ by associating 
to $e_\zlma$, $\zlma=1,\dots,k$ the vector field $X_{e_\zlma}$, and to $e_{k+1}$ the
vector field $X_{e_{k+1}} +\zf\zpar/\zpar x_1$. Clearly:
\begin{itemize}
\item The new action of $F$ is Abelian and has no singularities since 
$$X_{e_1},\dots,X_{e_k},X_{e_{k+1}}+\zf\zpar/\zpar x_1$$
commute and are linearly independent at every point of 
$B_{1/2}(0)\zpor\TT^{r_\zl}\zpor B'_1 (0)$. 
\item It extends to the whole $M_2$.
\end{itemize}
 The case  $\zl\znope L_0$ is very similar and the details are left to readers (it suffices to
 delete the third factor $B'_1 (0)$).

In short, we have constructed an action of $F$ on $M_2$ with no singularities. Thus  
${\rm rank}\, M_2 \zmai k+1$, hence ${\rm rank}\, M\zmai k+1$ {\em contradiction}.
{\em Now the proof of Theorem \ref{T11} is finished.}

\section{Examples} \label{sec-5}
In the next three examples one makes use of the characteristic classes for bounding from
above the minimal dimension of the orbits of a $\RR^n$-action.

\begin{example}\label{E51}
Set $M=S\zpor\RR$ where $S$ is a compact connected  surface of odd characteristic. Then
(the Stiefel-Whitney class) $w_2 (TM)\znoi 0$ and the span and the rank of $S$ equal 1.
Therefore by Theorem \ref{T11} any action of $\RR^n$ on $M$ has an orbit of 
dimension $\zmei 1$.

Obviously this is the best possible result since every complete and non-singular vector field 
gives rise to an action of $\RR$ on $M$ all whose orbits have dimension one.
\end{example}

\begin{example}\label{E52}
Let $R$ be a compact, connected and orientable surface and let $\zp\co M\zfl R$ be a complex
line bundle with odd Chern class. Then as real manifold $\dim M=4$ and $w_2 (TM)\znoi 0$,
hence the span and the rank of $M$ are $\zmei 2$.

Therefore by Theorem \ref{T11} any action of $\RR^n$ on $M$ has some orbit of 
dimension $\zmei 2$.

\begin{lemma}\label{L53}
Consider a real $\zlma$-plane bundle $\zp\co P\zfl Q$ where $Q$ is a manifold. Then 
there exists an action of some $\RR^n$ on $P$ whose orbits are the fibres 
of $\zp\co P\zfl Q$. 
\end{lemma}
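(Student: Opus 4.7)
The plan is to realise the action by fibrewise translation along a finite family of smooth sections of $P$ whose values span the bundle pointwise. Concretely, suppose one has smooth sections $s_1,\dots,s_n$ of $\zp\co P\zfl Q$ with $s_1(q),\dots,s_n(q)$ spanning $\zp^{-1}(q)$ for every $q\in Q$. Then the formula
$$(t_1,\dots,t_n)\zpu v\,=\,v+\zsu_{i=1}^n t_i\, s_i(\zp(v))$$
visibly defines an action of $\RR^n$ on $P$, and the orbit of any $v\in P$ is the affine subspace $v+\mathrm{span}\{s_1(\zp(v)),\dots,s_n(\zp(v))\}=\zp^{-1}(\zp(v))$, i.e.\ the whole fibre through $v$. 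The whole proof therefore reduces to producing such sections and verifying that the above prescription is smooth.

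For the first task one invokes the standard fact that every smooth manifold $Q$ of dimension $d$ admits a finite open cover by $d+1$ sets, each a disjoint union of coordinate charts; $P$ is trivialisable over any such set, since it is trivial on each contractible component and distinct components are disjoint. Picking local frames on the sets of a finite trivialising cover $U_1,\dots,U_N$ obtained this way and multiplying them by a subordinate smooth partition of unity $\zr_1,\dots,\zr_N$ yields finitely many global smooth sections $s_1,\dots,s_n$, extended by zero off their respective domains. At every $q\in Q$ some $\zr_\za(q)$ is positive, so the values at $q$ of the $\zlma$ sections coming from the local frame on $U_\za$ span $\zp^{-1}(q)$, as required.

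For smoothness and commutativity of the resulting action, promote each $s_i$ to a vertical vector field $X_i$ on $P$ by means of the canonical identification of the vector space $\zp^{-1}(\zp(v))$ with its own tangent space at the point $v$; that is, set $X_i(v):=s_i(\zp(v))$. In any local bundle chart with fibre coordinates $w_1,\dots,w_\zlma$ this reads $X_i=\zsu_{j=1}^{\zlma} s_i^j(q)\,\zpar/\zpar w_j$, so $X_i$ is smooth and tangent to the fibres; its flow $\zF^i_t(v)=v+t\,s_i(\zp(v))$ is fibrewise translation, defined for every $t\in\RR$, so $X_i$ is complete; and $[X_i,X_j]=0$ since the coefficient functions depend only on the base variable. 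The joint flow of $(X_1,\dots,X_n)$ is precisely the $\RR^n$-action written above, whose orbits are the fibres of $\zp$. The only non-trivial point in the whole argument is the production of the finite generating family $s_1,\dots,s_n$; granted that, every remaining verification is immediate.
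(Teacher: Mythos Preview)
Your argument is correct and follows essentially the same route as the paper: a section $s$ of $\zp$ yields the complete vertical vector field $X_s(v)=s(\zp(v))$, any two such fields commute, and a finite family of sections spanning each fibre then produces the desired $\RR^n$-action by fibrewise translation. The only difference is that the paper simply \emph{assumes} a finite spanning family $\{g_1,\dots,g_n\}$ exists, whereas you supply a justification via a finite trivialising cover of order $\dim Q+1$; this extra care is appropriate since $Q$ need not be compact.
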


\begin{proof}
First observe that a section $g\co Q\zfl P$ gives rise to a vector field $X_g$ on $P$ tangent 
to the fibres and constant along them (with respect to the structure of vector space)
by setting $X_g (g(q))=g(q)$, $q\zpe Q$; moreover $X_g$ is complete.

On the other hand if $h\co Q\zfl P$ is another section then $[X_g ,X_h ]=0$.  

Finally consider a family of sections $\{g_1 ,\dots,g_n \}$ such that 
$\{g_1 (q),\dots,g_n (q)\}$ spans $\zp^{-1}(q)$, as real vector space, 
for every $q\zpe Q$. If $\{e_1 ,\dots,e_n \}$ is a basis of the Lie algebra
$V$ of $\RR^n$, then there exists an action of $\RR^n$ on $P$  such that the fundamental
vector field associated to $e_j$ is $X_{g_j}$,  $j=1,\dots,n$. The orbits of this $\RR^n$
actions are the fibres of $\zp\co P\zfl Q$.
\end{proof}

Now Lemma \ref{L53} applied to $\zp\co M\zfl R$ shows the existence of actions of $\RR^n$
on $M$ whose orbits have dimension two since they are the fibres of this fibre bundle.
\bigskip

Recall that $\CC P^r$ minus one point is (diffeomorphic to) the total space of the canonical
complex line bundle over $\CC P^{r-1}$. On the other hand $\CC^r$ will be regarded
as an open set of $\CC P^r$ in the usual way.

Let $M_\zlma$, $\zlma\zmai 1$, be $\CC P^2$ minus $\zlma$ different points, and
let $J$ be the canonical complex structure on $\CC P^r$. On $\CC P^2$ consider
the projective vector field $X$ which on $\CC^2$ is written as 
$X=\zpar/\zpar z_1 +z_1 \zpar/\zpar z_2$. Then $X$ only has one singular point, let us say 
$p$ (corresponding to the line $z_1 =0$ of $\CC P^2 -\CC^2 \zeq\CC P^1$).

Moreover from the real viewpoint $X,JX$ commute and are linearly independent everywhere
but $p$. Since $M_1$ can be identify to $\CC P^2 -\{p\}$ it follows that 
${\rm rank}\, M_1 \zmai 2$ and finally, as $M_1$ fibres over $\CC P^1$ with odd Chern
class, that ${\rm rank}\, M_1 =2$. 

For constructing an action of $\RR^4$ on $M_1$ it suffices to consider $\{X,JX,Y,JY\}$,
 where $Y$ is the projective vector field that on $\CC^2$ writes
$Y=\zpar/\zpar z_2$, as a basis of the Lie algebra of
fundamental vector fields. This action has one orbit of (real) dimension four ($\CC^2$) 
and one orbit of dimension two ($\CC P^1 -\{p\}$).

A similar construction can be done for  $M_2$ and $M_3$ by considering the projective vector fields
$X',Y',X'',Y''$ respectively, which on $\CC^2$ are written as
$X'=\zpar/\zpar z_1 +z_2 \zpar/\zpar z_2$, $Y'=z_2 \zpar/\zpar z_2$,
$X''=z_1 \zpar/\zpar z_1 -z_2 \zpar/\zpar z_2$ and
$Y''=z_1 \zpar/\zpar z_1 +z_2 \zpar/\zpar z_2$.

\begin{remark}\label{R54}
\begin{itemize} $\,$
\item In fact one has showed that $M_1$, $M_2$ and $M_3$ have file 2.
\item Any holomorphic action of $\RR^n$ on a complex compact manifold of real dimension  4
and non-vanishing characteristic always has fixed points (see Corollary 1.8 of \cite{HT}; for
the analytic actions of $\RR^2$ on dimension 4 see \cite{BO}).
\end{itemize}
\end{remark}

From the next lemma follows that every manifold $M_\zlma$, $\zlma\zmai 1$, has rank 2.

\begin{lemma}\label{L55}
Consider a connected open manifold $P$ of dimension $\zmai 2$ and a point $p\zpe P$.
Then $P$ and $P-\{p\}$ have the same rank and the same span. 
\end{lemma}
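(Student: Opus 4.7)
The inequalities $\operatorname{span}(P) \zmei \operatorname{span}(P-\{p\})$ and $\operatorname{rank}(P) \zmei \operatorname{rank}(P-\{p\})$ are immediate, since restricting a family of (commuting) vector fields on $P$ that is linearly independent at every point of $P$ gives a family on $P-\{p\}$ with the same properties. The substance of the lemma lies in the reverse inequalities, which I would derive from the stronger statement that $P$ is diffeomorphic to some open subset $W$ of $P-\{p\}$: given vector fields $X_{1},\dots,X_{n}$ on $P-\{p\}$ that are (commuting and) pointwise linearly independent, their restrictions to $W$ pulled back through the diffeomorphism $P\zfl W$ yield vector fields on $P$ with the same properties, and taking $n=\operatorname{rank}(P-\{p\})$ or $n=\operatorname{span}(P-\{p\})$ finishes the argument.

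To build such a $W$ I would invoke Corollary \ref{C32}. The first step is to choose, using that $P$ is connected, non-compact and of dimension $\zmai 2$, a proper smooth embedding $\zg:\RR\zfl P$ with $\zg(0)=p$; its image $N=\zg(\RR)$ is then both a closed subset and a regular submanifold of $P$. Next, on $\RR$ take the vector field $Z = e^{-u}\,\zpar/\zpar u$, whose flow integrates explicitly to $\Phi_{t}(u)=\log(e^{u}+t)$. Thus $\Phi_{1}$ is defined on all of $\RR$ and $\Phi_{1}(\RR)=(0,\zinf)$, which in particular omits $0$. Transporting $Z$ by $\zg$ to a vector field $X$ on $N$, Corollary \ref{C32} produces a diffeomorphism between $P$ and $W=(P-N)\zun\zg((0,\zinf))=P-\zg((-\zinf,0])$; since $p=\zg(0)$ lies in $\zg((-\zinf,0])$, the set $W$ is contained in $P-\{p\}$, as required.

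The main technical obstacle is the first step, the existence of a proper smooth embedding $\zg:\RR\zfl P$ with $\zg(0)=p$. This is a standard fact in smooth topology for any connected non-compact manifold of dimension $\zmai 2$: one concatenates two disjoint properly embedded rays $[0,\zinf)\zfl P$ emanating from $p$, whose existence uses that $P$ has at least one end, and where the hypothesis $\dim P \zmai 2$ provides enough transverse room to keep the two rays disjoint. (The dimension hypothesis is essential here: no connected open one-manifold admits a properly embedded copy of $\RR$ through a prescribed point, reflecting the fact that $\RR-\{0\}$ is disconnected.)
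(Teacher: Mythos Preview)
Your argument is correct and essentially identical to the paper's: both apply Corollary~\ref{C32} to a properly embedded copy of $\RR$ in $P$ to exhibit $P$ as diffeomorphic to an open subset $W\subset P-\{p\}$, and then sandwich $\lambda(P-\{p\})$ between $\lambda(P)$ and $\lambda(W)=\lambda(P)$. The only differences are cosmetic---the paper produces the embedded line concretely as an integral curve of the gradient of a nonsingular function $f\co P\to\RR$ and invokes Lemma~\ref{L33} where you use the explicit field $e^{-u}\,\partial/\partial u$; one small slip is your parenthetical about dimension one (the identity $\RR\to\RR$ \emph{is} a proper embedding through any prescribed point), but this is irrelevant to the proof.
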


\begin{proof}
Consider a non-singular function $f\co P\zfl\RR$. Let $Y$ be the gradient vector field of $f$
with respect to a Riemannian metric such that $Y$ is complete. Let $\zg\co\RR\zfl P$ be any
integral curve of $Y$; then $\zg$ is injective and $\zg(\RR)$ is a closed subset and a regular 
submanifold of $P$.

Set $C=\zg(\RR- (-1,1))$; then $P$ and $P-C$ are diffeomorphic (apply Corollary \ref{C32}
and Lemma \ref{L33}).

Now set $p=\zg(2)$ and let $\zlma$ be either the rank or the span depending on cases.
One has:
$$\zlma(P)=\zlma(P-C)\zmai\zlma(P-\{p\})\zmai\zlma(P).$$
\end{proof}

Observe that the file of $P$ and that of $P-\{p\}$ may be quite different, for instance 
${\rm file}\,\RR^3 =3$ but ${\rm file}(\RR^3 -\{0\})=1$ (see \cite{RO}).
\end{example}

\begin{example}\label{E56}
In Examples \ref{E51} and  \ref{E52} the main tool for bounding the minimal dimension of
the orbits was the Stiefel-Whitney classes. Now we will consider the Pontrjagin classes 
(sometimes all the Stiefel-Whitney classes but $w_0$ vanish, for instance $T\CC P^7$). 

Let $M$ be a connected manifold of dimension $m$ and rank $k$. Consider vector fields
$X_1 ,\dots,X_k$ commuting among them and linearly independent everywhere. Denote by
$\F$ the foliation defined by $X_1 ,\dots,X_k$.
As $T\F$ is parallelizable, $TM$ and the normal bundle to $T\F$ have the same characteristic
classes.

Let $\za$ be a non-zero element, of degree $4\zlma\zmai 4$, of the ring of Pontrjagin classes of
$TM$. By the Bott's theorem on characteristic classes of foliations $4\zlma\zmei 2m-2k$, 
hence $k\zmei m-2\zlma$.

Therefore by Theorem \ref{T11} on has:

\begin{proposition}\label{P57}
Under the hypotheses above any action of $\RR^n$ on $M$ possesses  an orbit of
dimension $\zmei m-\zlma-1$.
\end{proposition}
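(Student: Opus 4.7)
The plan is to combine the codimension bound $k \leq m - 2\ell$ already established in the paragraph preceding the proposition (via Bott's vanishing theorem applied to the foliation $\F$ trivialized by $X_1, \dots, X_k$) with Theorem~\ref{T11}. Recall that the argument there uses the splitting $TM = T\F \oplus N\F$, which combined with the triviality of $T\F$ gives $p(TM) = p(N\F)$; then Bott's theorem forces any non-zero Pontrjagin element to sit in degree at most $2\,\mathrm{codim}\,\F = 2(m-k)$, so $4\ell \leq 2(m-k)$, i.e. $k \leq m - 2\ell$.

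To apply Theorem~\ref{T11} I first need to rule out that $M$ is a cylinder. If $M$ were diffeomorphic to $\TT^r \zpor \RR^{m-r}$, then $TM$ would be trivial (as a product of a parallelizable manifold with a Euclidean space), and in particular all its Pontrjagin classes of positive degree would vanish. This contradicts the existence of a non-zero element of degree $4\ell \geq 4$ in the Pontrjagin ring of $M$. Hence $M$ is not a cylinder and the hypothesis of Theorem~\ref{T11} is met.

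Now Theorem~\ref{T11} yields an orbit of dimension strictly less than
$$\frac{m+k}{2} \zmei \frac{m + (m - 2\ell)}{2} = m - \ell.$$
Since orbit dimensions are integers, such an orbit has dimension $\zmei m - \ell - 1$, which is exactly the conclusion of Proposition~\ref{P57}.

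There is essentially no serious obstacle: the only subtlety is the cylinder exclusion, and it is handled by the triviality of the tangent bundle of a cylinder. All the heavy lifting has already been done by Bott's vanishing theorem (on the characteristic-class side, to convert the Pontrjagin-class hypothesis into the rank bound $k \leq m - 2\ell$) and by Theorem~\ref{T11} itself (on the dynamical side, to convert the rank bound into the orbit-dimension bound).
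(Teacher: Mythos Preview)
Your proof is correct and follows the same approach as the paper: combine the bound $k\leq m-2\ell$ obtained from Bott's vanishing theorem with Theorem~\ref{T11} to get an orbit of dimension $<(m+k)/2\leq m-\ell$, hence $\leq m-\ell-1$. The only difference is that you make the cylinder exclusion explicit (via triviality of $T(\TT^r\times\RR^{m-r})$), whereas the paper leaves this implicit; this is a welcome clarification rather than a departure.
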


A particular case of Proposition \ref{P57} is as follows. Let $M$ be the total space of a
complex line bundle over  $\TT^{4r}$, $r\zmai 1$, with Chern class $c_1$ such that
$c_1^{2r}\znoi 0$. Then as real 2-plane bundle $p_1^r \znoi 0$ where $p_1$ is its
first Pontrjagin class. Since the tangent bundle of $\TT^{4r}$ is parallelizable, it is
easily seen that $p_1 (TM)^r \znoi 0$. Now $m=4r+2$ and $\zlma =r$; therefore by
Proposition \ref{P57} any action of $\RR^n$ on $M$ has an orbit of
dimension $\zmei 3r+1$.

Observe that the span of $M$ equals $4r$ (while the rank $\zmei 2r+2$). Indeed, by means of 
a connection the vector fields on $\TT^{4r}$: $\zpar/\zpar\zh_1 ,\dots,\zpar/\zpar\zh_{4r}$ can
be lifted to $M$, so ${\rm span}\,M\zmai 4r$. As $M$ is orientable if  
${\rm span}\,M\zmai 4r+1$ then ${\rm span}\,M=4r+2$ and $M$ is parallelizable, which
implies $p_1 (TM)=0$, {\em contradiction}.

Moreover if $c_1$ is even, i.e. if there is $\zb\zpe H^2 (\TT^{4r},\ZZ)$ such that $c_1 =2\zb$, 
then $w_j (TM)=0$, $j=1,\dots,4r+2$.

Another case is the following one. Let $M'$ be $\CC P^7$ minus a point. Then 
$w_j (TM')=0$, $j\zmai 1$, but $p_1 (TM')^3 \znoi 0$; therefore ${\rm rank}(M')\zmei 8$
and any action of $\RR^n$ on $M'$ possesses an orbit of
dimension $\zmei 10$. By Lemma \ref{L55} and Theorem \ref{T11} the same result holds 
for $\CC P^7$ minus a finite number ($\zmai 1$) of points.
\end{example}

\section{$\RR^n$-actions whose orbits have codimension $\zmei 1$} \label{sec-6}
For the sake of completeness, one will prove the following result:

\begin{proposition}\label{P61}
Consider an action of $\RR^n$ on a connected manifold $M$ of dimension $m\zmai 3$.
Assume that the codimension of every orbit is $\zmei 1$. Then the universal covering
of $M$ is diffeomorphic to $\RR^m$.
\end{proposition}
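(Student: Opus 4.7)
The plan combines Theorem \ref{T11} with Palmeira's structure theorem for codimension-one foliations by planes on simply-connected manifolds. If $M$ is a cylinder $\TT^r\zpor\RR^{m-r}$ the conclusion is immediate, so assume $M$ is not a cylinder. Theorem \ref{T11} then produces an orbit of dimension strictly less than $(m+k)/2$, and since every orbit has dimension at least $m-1$ by hypothesis, the inequality $m-1<(m+k)/2$ forces the rank to satisfy $k\zmai m-1$.

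Using $k\zmai m-1$, I would choose $m-1$ commuting vector fields $Y_1,\dots,Y_{m-1}$ on $M$ that are linearly independent at every point; these define a regular codimension-one foliation $\F$. Each leaf is an orbit of the (abelian, locally free) local $\RR^{m-1}$-action generated by $Y_1,\dots,Y_{m-1}$ and so is diffeomorphic to a cylinder $\TT^r\zpor\RR^{m-1-r}$, whose universal cover is $\RR^{m-1}$.

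Lifting $\F$ to a foliation $\tilde\F$ on the universal cover $\zp\co\tilde M\zfl M$ gives a codimension-one foliation whose leaves are connected covers of the leaves of $\F$, and the lifted vector fields $\tilde Y_1,\dots,\tilde Y_{m-1}$ form a commuting frame trivializing the holonomy of $\tilde\F$. Using this together with the simple-connectedness of $\tilde M$, I would argue that each leaf of $\tilde\F$ is simply connected, hence diffeomorphic to $\RR^{m-1}$. Palmeira's theorem then yields $\tilde M$ diffeomorphic to $\RR^{m-1}\zpor T$ for a simply connected $1$-manifold $T$; a global transversal to $\tilde\F$ coming from the commuting frame implies $T$ is Hausdorff, so $T\cong\RR$ and hence $\tilde M\cong\RR^m$.

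The main obstacle will be verifying the two technical hypotheses of Palmeira's theorem, namely the simple connectedness of the leaves of $\tilde\F$ and the Hausdorffness of the leaf space of $\tilde\F$. Both are classical delicate points in the open-manifold setting, and here one must exploit the abelian origin of $\F$ (which trivializes the holonomy) and the transverse orientability supplied by the commuting frame in order to carry them out.
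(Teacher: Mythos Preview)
Your opening move --- applying Theorem~\ref{T11} to force ${\rm rank}\,M\zmai m-1$ --- is a genuinely different route from the paper's, which never invokes Theorem~\ref{T11}. The paper works directly with the given $\RR^n$-action: it selects a generic $(m-1)$-plane $F\zco V$ (via Lemma~\ref{L21}) so that the singular set $S(F)$ lies inside ${\zmontar{\circ}\over{\zS}_{m-1}}$, and defines a codimension-one foliation $\F$ by setting $T_p\F=F(p)$ on $M-S(F)$ and $T_p\F=V(p)$ on ${\zmontar{\circ}\over{\zS}_{m-1}}$. The point is that near every point $\F$ is the orbit foliation of a \emph{global} locally free $\RR^{m-1}$-action (coming from $F$, or from another $(m-1)$-plane $F'$ near points of $S(F)$). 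By \cite{GO}, Ch.~3, such a foliation has no vanishing cycles; hence on the simply connected cover $\zp_1(L)\hookrightarrow 0$, the leaves are simply connected cylinders, i.e.\ $\RR^{m-1}$, and Palmeira's corollary (\cite{PL}, Cor.~3, p.~110) gives $\RR^m$ directly.

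Your argument has two real gaps. First, the commuting frame $Y_1,\dots,Y_{m-1}$ supplied by ${\rm rank}\,M\zmai m-1$ need not be complete (rank is not file), so you obtain only a \emph{local} $\RR^{m-1}$-action. You then cannot conclude that leaves are cylinders, and the no-vanishing-cycles result in \cite{GO} --- stated for global locally free actions --- does not apply as written; both the step ``simply connected leaf $\Rightarrow$ leaf $\cong\RR^{m-1}$'' and the injectivity $\zp_1(L)\zfl\zp_1(\tilde M)$ remain unjustified. Trivial holonomy alone does not rule out vanishing cycles. Second, your formulation of Palmeira's theorem is incorrect: it does not produce a splitting $\RR^{m-1}\zpor T$ with $T$ a one-manifold, and Hausdorffness of the leaf space is not among its hypotheses. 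The corollary actually used says directly that a simply connected open $m$-manifold ($m\zmai 3$) foliated by planes is diffeomorphic to $\RR^m$; no leaf-space or transversal argument is needed once the leaves are known to be planes. The paper's construction is, in effect, what one gets by repairing your approach: instead of arbitrary commuting vector fields it uses fundamental vector fields of the given global action, chosen generically so as to be independent off a controlled singular set.
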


\begin{proof}
It is enough to prove the result when $M$ is simply connected. 

First one will construct a codimension one foliation on $M$ with no vanishing cycles. 
If $n\zmei m$ set $C=\zva$; otherwise let $C$ be the set of those $F\zpe\GG(m-1)$ such that 
$h_m \co\zS_m \zfl\GG(n-m)$ is not transverse to $N_F (n-m)$ [recall that $\zS_m$ is an open
set of $M$]. By (i) of Lemma \ref{L21} $C$ is of the first category in $\GG(m-1)$.

Observe that if $F$ belongs to $\GG(m-1)-C$ then $\dim F(p)=m-1$ for every $p\zpe\zS_m$.
Indeed, if $n\zmei m$ it is obvious and if $n>m$, since ${\rm rank}\, h_m =0$ and the 
codimension of each stratum of $N_F(n-m)$ is $\zmai 2$, $h_m$ is transverse to $N_F(n-m)$
if and only if $h_m (\zS_m )\zin N_F (n-m)=\zva$.

Now consider a $\zS_{m-1}$-adapted chart $(U,x_1 ,\dots,x_m )$ and an extension
$\tilde h_{m-1}\co U\zfl\GG(n-m+1)$ of $h_{m-1}$ like in the proof of Lemma \ref{L41}.  
Given $a\zpe U$ denoted 
by $S_a$ and $T_a$ the slice and the transversal in $U$ defined by $x_m =a_m$ and by
$x_1 =a_1,\dots,x_{m-1}=a_{m-1}$ respectively. Observe that the set 
$U\zin\zpar\zS_{m-1}$, where 
$\zpar\zS_{m-1}=\zS_{m-1}-{\zmontar{\circ}\over{\zS}_{m-1}}$, is saturated for the slices
of $U$ and of the first category in $U$. Therefore given a transversal $T_a$ the set 
$T_a \zin\zpar\zS_{m-1}$ is of the first category in $T_a$.

Since $\dim T_a =1$, by (ii) of Lemma \ref{L21} applied to 
$\tilde h_{m-1}\co T_a \zfl\GG(n-m+1)$ the set $D$ of those $F\zpe\GG(m-1)$ for which
there exists $p\zpe T_a \zin\zpar\zS_{m-1}$ such that $\dim(h_{m-1}(p)\zin F)\zmai 1$ is of
the first category in $\GG(m-1)$. 

Note that if $F\znope D$ then $\dim F(p)=m-1$ for every
$p\zpe T_a\zin\zpar\zS_{m-1}$. 
But $h_{m-1}$ and $\tilde h_{m-1}$ are constant along the slices of $U$, so  if $F\znope D$ 
then $\dim F(q)=m-1$ for every $q\zpe U\zin\zpar\zS_{m-1}$. 

As $\zS_{m-1}$ can be covered by a countable family of $\zS_{m-1}$-adapted charts, it
follows the existence of a set $D'\zco\GG(m-1)$ of the first category  such that 
$\dim F(p)=m-1$ for any $F\zpe(\GG(m-1)-D')$ and any $p\zpe\zpar\zS_{m-1}$. 

Consider $F\zpe(\GG(m-1)-C\zun D')$; then the singular set $S(F)$ is included in
${\zmontar{\circ}\over{\zS}_{m-1}}$. Let $\F$ be the codimension one foliation on $M$
defined:
\begin{itemize} 
\item  On $M-S(F)$ by $F$, that is to say $T_p \F =F(p)$ for every $p\zpe M-S(F)$.
\item On ${\zmontar{\circ}\over{\zS}_{m-1}}$ by the orbits of the action of $\RR^n$
\end{itemize}

It is easily checked that the definition of $\F$ is coherent.

As $F$ is a subalgebra of the Lie algebra $V$ of $\RR^n$, the foliation $\F$ is also defined
on $M-S(F)$ by an action locally free of $\RR^{m-1}$. Therefore $\F$ does not have
vanishing cycles on $M-S(F)$ (see Chapter 3 of \cite{GO}).

Now consider any point $q\zpe S(F)$. Then there exists $F'\zpe\GG(m-1)$ such that
$F'(q)=T_q \F$. Thus on ${\zmontar{\circ}\over{\zS}_{m-1}}-S(F')$, where $S(F')$
is the singular set of $F'$, the foliation $\F$ is defined by a locally free of
$\RR^{m-1}$ and does not have vanishing cycles.

In short, on $M$ the foliation $\F$ does not possesses vanishing cycles. Thus if $L$ is any 
leaf of $\F$ its fundamental group injects in $\zp_1 (M)=0$, hence $L$ is simply connected.
But by the construction of $\F$ its leaves are cylinders, so $L=\RR^{m-1}$. In other words,
$\F$ is a plane foliation of $M$ and by Corollary 3, page 110, of \cite{PL} $M$ is 
diffeomorphic to $\RR^m$.
\end{proof}

\begin{example}\label{E62}
Let $\RR_e^4$ be any exotic $\RR^4$. Then every action of $\RR^n$ on $\RR_e^4$ 
has an orbit of dimension $\zmei 2$. 
\end{example}







\end{document}